\documentclass{amsart}

\usepackage{bbding}
\usepackage{amsmath}
\usepackage{amssymb,amsfonts,amsthm}
\usepackage[all]{xy}

\newtheorem{theorem}{Theorem}[section]
\newtheorem{lemma}[theorem]{Lemma}
\newtheorem{proposition}[theorem]{Proposition}
\newtheorem{corollary}[theorem]{Corollary}
\theoremstyle{definition}
\newtheorem{definition}[theorem]{Definition}
\newtheorem{example}[theorem]{Example}

\newtheorem{remark}[theorem]{Remark}
\numberwithin{equation}{section}

\newcommand{\blankbox}[2]

\begin{document}
\title{Extending structures for Gel'fand-Dorfman bialgebras}
\author{Jiajia Wen}
\address{School of Mathematics, Hangzhou Normal University,
Hangzhou, 311121, China}
\email{2020111008010@stu.hznu.edu.cn}

\author{Yanyong Hong}
\address{School of Mathematics, Hangzhou Normal University,
Hangzhou, 311121, China}
\email{hongyanyong2008@yahoo.com}
\subjclass[2010]{17A30, 17D25, 17A60, 17B69}
\keywords{Gel'fand-Dorfman bialgebra, Lie conformal algebra, Extending structures problem, Novikov algebra}
\thanks{This work was supported by the National Natural Science Foundation of China (No. 12171129, 11871421), the Zhejiang Provincial Natural Science Foundation of China (No. LY20A010022) and the Scientific Research Foundation of Hangzhou Normal University (No. 2019QDL012).}
\begin{abstract}
Gel'fand-Dorfman bialgebra, which is both a Lie algebra and a Novikov algebra with some compatibility condition, appears in the study of Hamiltonian pairs in  completely integrable systems and a class of special Lie conformal algebras called quadratic Lie conformal algebras. In this paper, we investigate the extending structures problem for Gel'fand-Dorfman bialgebras, which is equivalent to some extending structures problem of quadratic Lie conformal algebras. Explicitly, given a Gel'fand-Dorfman bialgebra $(A, \circ, [\cdot,\cdot])$, this problem asks that how to describe and classify all Gel'fand-Dorfman bialgebraic structures on a vector space $E$ $(A\subset E$) such that $(A, \circ, [\cdot,\cdot])$ is a subalgebra of $E$ up to an isomorphism whose restriction on $A$ is the identity map. Motivated by the theories of extending structures for Lie algebras and Novikov algebras, we construct an object $\mathcal{GH}^2(V,A)$ to answer the extending structures problem by introducing a definition of unified product for Gel'fand-Dorfman bialgebras, where $V$ is a complement of $A$ in $E$. In particular, we investigate the special case when $\text{dim}(V)=1$ in detail.
\end{abstract}
\maketitle
\section{Introduction}
Gel'fand-Dorfman bialgebra (see \cite{Xu2}) $(A, \circ, [\cdot,\cdot])$ is a vector space with a Lie algebraic structure $(A, [\cdot, \cdot])$ and a Novikov algebraic (the term was proposed in \cite{O1}) structure $(A, \circ)$ with some compatibility condition. It corresponds to a Hamiltonian pair, which plays fundamental roles in completely integrable systems (see \cite{GD}). Moreover, there are also close relationships between Gel'fand-Dorfman bialgebras and Lie conformal algebras (see \cite{Xu1, H2, HW}), which describe the algebraic properties of the operator product
expansion in two-dimensional conformal field theory (\cite{K1, DK}). In particular, by the result in \cite{Xu1}, a Gel'fand-Dorfman bialgebra is equivalent to a special Lie conformal algebra named quadratic Lie conformal algebra. Explicitly, the correspondence is given as follows. $R=\mathbb{C}[\partial]V$ is a quadratic Lie conformal algebra, i.e. for all $a$, $b\in V$,
\begin{eqnarray*}
[a_\lambda b]=\partial (b\circ a)+\lambda (b\ast a)+[b,a],
\end{eqnarray*}
where $\circ$, $\ast$, $[\cdot, \cdot]$ are three linear operations from $V\otimes V\rightarrow V$, if and only if $b\ast a=a\circ b+b\circ a$ for all $a$, $b\in V$ and $(V, \circ, [\cdot, \cdot])$ is a Gel'fand-Dorfman bialgebra. Note that both a Lie algebra with the trivial Novikov algebraic structure and a Novikov algebra with the trivial Lie algebraic structure are Gel'fand-Dorfman bialgebra. Therefore, it should be pointed out that Novikov algebras (special Gel'fand-Dorfman bialgebras) also appear in the study of linear Poisson brackets of hydrodynamic type (see \cite{BN}).

There have been some works on Gel'fand-Dorfman bialgebras, for example, the classification of Gel'fand-Dorfman bialgebras in low dimensions (see \cite{BM}), kinds of constructions (see \cite{Xu1, Xu3}),
Gel'fand-Dorfman bialgebras on some infinite-dimensional Lie algebras (see \cite{Xu1,OZ}), the study of Gel'fand-Dorfman bialgebras related with differential Poisson algebras (see \cite{KSO}) and so on.

In this paper, we plan to investigate the following problem of Gel'fand-Dorfman bialgebras:\\
{\bf Extending structures problem:}  {\emph {Given a Gel'fand-Dorfman bialgebra $(A, \circ, [\cdot, \cdot])$ and $E$ a vector space containing $A$. Describe and classify all Gel'fand-Dorfman bialgebraic structures on $E$ such that $(A, \circ, [\cdot,\cdot])$ is a subalgebra of $E$ up to an isomorphism whose restriction on $A$ is the identity map.}\\
This problem is natural and important from the point of view of Gel'fand-Dorfman bialgebras, i.e. how to obtain a `big' Gel'fand-Dorfman bialgebra from a given  `small' Gel'fand-Dorfman bialgebra. This is one motivation. Our another important motivation is from the study of Lie conformal algebras. A general theory of extending structures for Lie conformal algebras was presented in \cite{HS}. Note that quadratic Lie conformal algebras are a class of important Lie conformal algebras including Virasoro conformal algebra, current conformal algebras, Heisenberg-Virasoro conformal algebra (see \cite{SY}) and so on. Therefore, a natural problem appears:\\
\emph{ Given a quadratic Lie conformal algebra $R=\mathbb{C}[\partial]A$, $E=\mathbb{C}[\partial]A\oplus \mathbb{C}[\partial]V$ containing $R$ as a $\mathbb{C}[\partial]$-submodule. Describe and classify all quadratic Lie conformal algebraic structures on $E=\mathbb{C}[\partial](A\oplus V)$ such that $R$ is a Lie conformal subalgebra of $E$ up to an isomorphism whose restriction on $R$ is the identity map.}\\
Note that if $\varphi: A\oplus V\mapsto A\oplus V$ is an isomorphism of Gel'fand-Dorfman bialgebras, then $\Phi:\mathbb{C}[\partial](A\oplus V)\mapsto \mathbb{C}[\partial](A\oplus V)$ defined by $\Phi(p(\partial)a+q(\partial)v)=p(\partial)\varphi(a)+q(\partial)\varphi(v)$ for all $a\in A$, $v\in V$, $p(\partial)$, $q(\partial)\in \mathbb{C}[\partial]$ is an isomorphism of the corresponding quadratic Lie conformal algebras. If we restrict the isomorphisms in the problem above to those induced from the isomorphisms of Gel'fand-Dorfman bialgebras on $A\oplus V$, by the correspondence of Gel'fand-Dorfman bialgebras and quadratic Lie conformal algebras, this problem is equivalent to the extending structures problem of Gel'fand-Dorfman bialgebras. This is the second motivation to investigate this problem. Similar problems for groups, Lie algebras, associative algebras,  Hopf algebras, Novikov algebras and so on have been investigated in \cite{AM1, AM2, AM3, AM4, H1} respectively.
Note that a Gel'fand-Dorfman bialgebra is both a Lie algebra and a Novikov algebra. Motivated by the theories of extending structures for Lie algebras and Novikov algebras developed in \cite{AM2, H1} respectively, we introduce a definition of unified product for Gel'fand-Dorfman bialgebras. With this tool, we construct an object $\mathcal{GH}^2(V,A)$  to give a theoretical answer for the extending structures problem of Gel'fand-Dorfman bialgebras, where $V$ is a complement of $A$ in $E$. Moreover, when $\text{dim} (V)=1$, it is shown that $\mathcal{GH}^2(V,A)$  can be computed and a specific example is given.

This paper is organized as follows. In Section 2, we recall some definitions of Gel'fand-Dorfman bialgebras and some facts about the extending structures of Lie algebras and Novikov algebras. In Section 3, we introduce the definition of unified product for Gel'fand-Dorfman bialgebras and construct an object $\mathcal{GH}^2(V,A)$ to give a theoretical answer for the extending structures problem, where $V$ is a complement of $A$ in $E$. In Section 4, a special case of unified product when $\text{dim}(V)=1$ is investigated in detail and an example to compute $\mathcal{GH}^2(V,A)$  is provided. By this example, we obtain  all quadratic Lie conformal algebraic structures on $\mathbb{C}[\partial](\mathbb{C}L\oplus \mathbb{C}W\oplus \mathbb{C}x)$ containing the Heisenberg-Virasoro conformal algebra $\mathbb{C}[\partial](\mathbb{C}L\oplus \mathbb{C}W)$ as a subalgebra up to the isomorphisms which are induced from the isomorphisms of the corresponding Gel'fand-Dorfman bialgebras whose restrictions on $\mathbb{C}L\oplus \mathbb{C}W$ is the identity map.

Throughout this paper, $K$ is a field and $\mathbb{C}$ is the field of complex
numbers. All vector spaces, Novikov algebras, Lie algebras, Gel'fand-Dorfman bialgebras, linear or bilinear maps are over $K$.
\section{ Preliminaries}
In this section, we recall some definitions about Gel'fand-Dorfman bialgebras and some results about extending structures for Lie algebras and Novikov algebras.

First, let us recall some basic concepts about Lie algebras (see \cite{AM2}).

A \textbf{Lie algebra} is a vector space $\mathfrak{g}$ together with a bilinear map $[\cdot, \cdot]: \mathfrak{g}\times \mathfrak{g}\to \mathfrak{g}$  satisfying the following conditions:
\begin{eqnarray*}
&&[x,x]=0,\\
&&[x,[y,z]]=[[x,y],z]+[y,[x,z]],
\end{eqnarray*}
for all $x$, $y$, $z\in \mathfrak{g}$. We denote it by $(\mathfrak{g},[\cdot, \cdot])$.

Let $(\mathfrak{g}, [\cdot, \cdot])$ be a Lie algebra and $V$ be a vector space. Let $\triangleright: \mathfrak{g}\times V\to V$ be a bilinear map. $(V,\triangleright)$ is called a \textbf{left module} of $(\mathfrak{g},[\cdot, \cdot])$ if
\begin{eqnarray*}
&&[x,y]\triangleright v=x\triangleright(y\triangleright v)-y\triangleright(x\triangleright v),
\end{eqnarray*}
for all $x$, $y\in \mathfrak{g}$, $v\in V$.

Let $\triangleleft: V\times \mathfrak{g}\to V$ be a bilinear map. $(V, \triangleleft)$ is called a \textbf{right  module} of $(\mathfrak{g},[\cdot, \cdot])$ if
\begin{eqnarray*}
&&v\triangleleft[x,y]=(v\triangleleft x)\triangleleft y-(v\triangleleft y)\triangleleft x,
\end{eqnarray*}
for all $x$, $y\in \mathfrak{g}$, $v\in V$.

Naturally, any left module of $(\mathfrak{g},[\cdot, \cdot])$ is a right module of $(\mathfrak{g},[\cdot, \cdot])$ via $x\triangleright v:=-v\triangleleft x$ and vice versa.

Next, we recall some definitions about Novikov algebras. These facts can be found in \cite{H1}.
\begin{definition}
A \textbf{Novikov algebra} is a vector space $A$ together with a bilinear map $\circ : A\times A\to A$ satisfying the following conditions:
\begin{eqnarray*}
&&(a,b,c) = (b,a,c), \text{where the associator $(a,b,c)= (a\circ b)\circ c-a\circ(b\circ c)$,}\\
&&(a\circ b)\circ c=(a\circ c)\circ b,
\end{eqnarray*}
for all $a$, $b$, $c\in A$. We denote it by $(A, \circ)$.
\end{definition}
\begin{remark}
Let $(A, \circ)$ be a Novikov algebra. Then there is a natural Lie algebraic structure on the vector space $A$ as follows:
\begin{eqnarray*}
[a, b]=a\circ b-b\circ a,~~~\text{for all $a$, $b\in A$.}
\end{eqnarray*}
We denote this Lie algebra by $(g(A), [\cdot,\cdot])$ and this Lie algebra is called the {\bf sub-adjacent Lie algebra of $(A, \circ)$}.
\end{remark}
\begin{definition}
Let $(A, \circ)$ be a Novikov algebra, $V$ be a vector space and $l_A$, $r_A: A\to gl(V)$ be two linear maps.  $(V, l_A, r_A)$ is called a {\bf bimodule} of $(A, \circ)$ if
\begin{eqnarray*}
&&l_A(a)l_A(b)v-l_A(a\circ b)v=l_A(b)l_A(a)v-l_A(b\circ a)v,\\
&&l_A(a)r_A(b)v-r_A(b)l_A(a)v=r_A(a\circ b)v-r_A(b)r_A(a)v,\\
&&l_A(a\circ b)v=r_A(b)l_A(a)v,\\
&&r_A(a)r_A(b)v=r_A(b)r_A(a)v,
\end{eqnarray*}
for all $a$, $b\in A$, $v\in V$.
\end{definition}

Then we recall some concepts and some facts about extending structures of Lie algebras and Novikov algebras.
\begin{definition} (see Definition 3.1 and Theorem 3.2 in \cite{AM2})
Let $(\mathfrak{g}, [\cdot,\cdot])$ be a Lie algebra and $V$ be a vector space. A \textbf{Lie extending datum} of $(\mathfrak{g}, [\cdot,\cdot])$ by $V$ is a system $\Omega(\mathfrak{g},V)=(\triangleleft,\triangleright,h,\{\cdot,\cdot\})$ consisting of four bilinear maps $\triangleleft: V\times \mathfrak{g}\to V$, $\triangleright: V\times \mathfrak{g}\to \mathfrak{g}$, $h: V\times V\to \mathfrak{g}$, $\{\cdot,\cdot\}: V\times V\to V$ satisfying the following seven conditions:
\begin{eqnarray*}
(L1)&&h(x,x)=0,\{x,x\}=0,\\
(L2)&&x\triangleleft[a,b]=(x\triangleleft a)\triangleleft b-(x\triangleleft a)\triangleleft b,\\
(L3)&&x\triangleright[a,b]=[x\triangleright a,b]+[a,x\triangleright b]+(x\triangleleft a)\triangleright b-(x\triangleleft b)\triangleright a,\\
(L4)&&\{x,y\}\triangleleft a=\{x,y\triangleleft a\}+\{x\triangleleft a,y\}+x\triangleleft (y\triangleright a)-y\triangleleft (x\triangleright a),\\
(L5)&&\{x,y\}\triangleright a=x\triangleright(y\triangleright a)-y\triangleright(x\triangleright a)+[a,h(x,y)]+h(x,y\triangleleft a)+h(x\triangleleft a,y),\\
(L6)&&h(x,\{y,z\})+h(y,\{z,x\})+h(z,\{x,y\})+x\triangleright h(y,z)+y\triangleright h(z,x)\\
&&+z\triangleright h(x,y)=0,\\
(L7)&&\{x,\{y,z\}\}+\{y,\{z,x\}\}+\{z,\{x,y\}\}+x\triangleleft h(y,z)+y\triangleleft h(z,x)\\
&&+z\triangleleft h(x,y)=0,
\end{eqnarray*}
for all $a$, $b\in \mathfrak{g}$, $x$, $y$, $z\in V$.

Let $\Omega(\mathfrak{g},V)=(\triangleleft,\triangleright,h,\{\cdot,\cdot\})$ be a Lie extending datum of a Lie algebra $(\mathfrak{g}, [\cdot,\cdot])$ by $V$ and let $\mathfrak{g}\natural V$ be the vector space $\mathfrak{g}\times V$ with the bracket $[\cdot,\cdot]$ defined by:
\begin{eqnarray*}
&&[(a,x),(b,y)]=([a,b]+x\triangleright b-y\triangleright a+h(x,y),\{x,y\}+x\triangleleft b-y\triangleleft a),
\end{eqnarray*}
for all $a$, $b \in \mathfrak{g}$, $x, y\in V$. Then $\mathfrak{g}\natural V$ is a Lie algebra called the {\bf unified product} of $\mathfrak{g}$ and $\Omega(\mathfrak{g},V)$.
\end{definition}
\begin{definition}
(see Definition 5.2 in \cite{AM2}) A \textbf{twisted derivation} of a Lie algebra $\mathfrak{g}$ is a pair $(\lambda, D)$ consisting of two linear maps $\lambda: \mathfrak{g}\rightarrow K$ and $D: \mathfrak{g}\rightarrow \mathfrak{g}$ such that for all $a$, $b\in \mathfrak{g}$,
\begin{eqnarray*}
(TD1)&&\lambda([a,b])=0,\\
(TD2)&&D([a,b])=[D(a),b]+[a,D(b)]+\lambda(a)D(b)-\lambda(b)D(a).
\end{eqnarray*}
\end{definition}
\begin{definition} (see Definition 3.1, Theorem 3.2 and Corollary 3.5 in \cite{H1})
Let $(A,\circ)$ be a Novikov algebra and $V$ be a vector space. A \textbf{Novikov extending datum} of $(A, \circ)$ by $V$ is a system $\Omega(A,V)=(l_A, r_A, l_V, r_V, f, \ast)$ consisting of four linear maps and two bilinear maps $l_A, r_A: A\to gl(V)$, $l_V, r_V: V\to gl(A)$, $f: V\times V\to A$, $\ast: V\times V\to V$ satisfying the following conditions:
\begin{eqnarray*}
(N1)&&l_V(x)(a\circ b)=-l_V(l_A(a)x)b +l_V(r_A(a)x)b\\
&&+(l_V(x)a)\circ b-(r_V(x)a)\circ b +r_V(r_A(b)x)a + a\circ(l_V(x)b),\\
(N2)&&l_A(a)r_A(b)x-r_A(b)l_A(a)x=r_A(a\circ b)x-r_A(b)r_A(a)x,\\
(N3)&&r_V(x)(a\circ b)-r_V(x)(b\circ a)=r_V(l_A(b)x)a-r_V(l_A(a)x)b\\
&&+a\circ(r_V(x)b)-b\circ(r_V(x)a),\\
(N4)&&l_A(a\circ b-b\circ a)x=l_A(a)l_A(b)x-l_A(b)l_A(a)x,\\
(N5)&&r_V (x\ast y)a = r_V (y)(r_V (x)a)- r_V (y)( l_V (x)a) + l_V (x)r_V (y)a\\
&&+f(l_A(a)x, y) + f(x, l_A(a)y)- a\circ f(x, y) - f(r_A(a)x, y),\\
(N6)&&l_A(a)(x\ast y) = -l_A(l_V (x)a-r_V (x)a)y + (l_A(a)x-r_A(a)x)\ast y\\
&&+r_A(r_V (y)a)x + x\ast(l_A(a)y), \\
(N7)&&l_V (x\ast y-y\ast x)a = (l_V (x)l_V (y)-l_V (y)l_V (x))a\\
&&-(f(x, y) - f(y, x)) \circ a + f(x, r_A(a)y) - f(y, r_A(a)x),\\
(N8)&&r_A(a)(x\ast y-y\ast x) = r_A(l_V (y)a)x-r_A(l_V (x)a)y \\
&&+x\ast(r_A(a)y)-y\ast(r_A(a)x),\\
(N9)&&f(x\ast y, z) - f(x, y\ast z) - f(y\ast x, z) + f(y, x \ast z)\\
&&+ r_V (z)(f(x, y) - f(y, x)) -l_V (x)f(y, z) + l_V (y)f(x, z) = 0,\\
(N10)&&(x\ast y)\ast z-x\ast(y\ast z)-(y\ast x)\ast z + y\ast(x\ast z)\\
&&+l_A(f(x, y) - f(y, x))z-r_A(f(y, z))x + r_A(f(x, z))y = 0.\\
(N11)&&(l_V (x)a)\circ b + l_V (rA(a)x)b = (l_V (x)b)\circ a + l_V (r_A(b)x)a,\\
(N12)&&r_A(b)r_A(a)x = r_A(a)r_A(b)x,\\
(N13)&&(r_V (x)a)\circ b + l_V (l_A(a)x)b = r_V (x)(a\circ b),\\
(N14)&&r_A(b)l_A(a)x = l_A(a\circ b)x,\\
(N15)&&r_V (y)r_V (x)a + f(l_A(a)x, y) = r_V (x)r_V (y)a + f(l_A(a)y, x),\\
(N16)&&l_A(r_V (x)a)y + (l_A(a)x)\ast y = l_A(r_V (y)a)x + (l_A(a)y)\ast x,\\
(N17)&&r_V (y)(l_V (x)a) + f(r_A(a)x, y) = f(x, y)\circ a + l_V (x\ast y)a,\\
(N18)&&l_A(l_V (x)a)y + (r_A(a)x)\ast y = r_A(a)(x\ast y),\\
(N19)&&r_V (z)f(x, y) + f(x\ast y, z) = r_V (y)f(x, z) + f(x\ast z, y),\\
(N20)&&l_A(f(x, y))z + (x\ast y)\ast z = l_A(f(x, z))y + (x\ast z)\ast y.
\end{eqnarray*}
for all $a$, $b\in A$, $x$, $y$, $z\in V$.

Let $\Omega(A,V)=(l_A,r_A,l_V,r_V,f,\ast)$ be a Novikov extending datum of $(A, \circ)$ by $V$ and $A\natural V$ be the vector space $A\times V$ with the bilinear map $\circ$ defined by:
\begin{eqnarray*}
&&(a,x)\circ(b,y)=(a\circ b+ l_V(x)b+ r_V(y)a+f(x,y), x\ast y+l_A(a)y+r_A(b)x),
\end{eqnarray*}
for all $a$, $b\in A$, $x$, $y\in V$.
Then $A\natural V$ is a Novikov algebra called the {\bf unified product} of $(A, \circ)$ and $\Omega(A,V)$.
\end{definition}

\begin{definition} (see Definition 4.3 in \cite{H1})
Let $(A,\circ)$ be a Novikov algebra. A \textbf{Novikov flag datum} of $(A,\circ)$ is a 6-tuples $(p, q, S, T, a_1, k)$ consisting of four linear maps: $p$, $q: A\to K$, $S$, $T: A\to A$ and two elements $a_1\in A$, $k\in K$ satisfying the following conditions:
\begin{eqnarray*}
(FN1)&&p(a\circ b) = p(b\circ a),~~~q(a\circ b)=q(a)q(b),\\
(FN2)&&S(a\circ b) = S(a)\circ b + a\circ S(b) + (q(a) - p(a))S(b) + q(b)T(a) - T(a)\circ  b,\\
(FN3)&&T(a\circ b) - T(b\circ a) = p(b)T(a) - p(a)T(b) + a\circ  T(b) - b\circ  T(a),\\
(FN4)&&T^2(a) = T(S(a)) - S(T(a)) + a\circ a_1 + (q(a) - 2p(a))a_1 +kT(a),\\
(FN5)&&p(S(a)) - p(T(a)) = q(T(a)) + k(p(a) - q(a)).\\
(FN6)&&S(a)\circ b + q(a)S(b) = S(b)\circ a + q(b)S(a),\\
(FN7)&&T(a\circ b) = T(a)\circ b + p(a)S(b),\\
(FN8)&&p(a\circ b) = p(a)q(b),\\
(FN9)&&T(S(a)) = a_1\circ a +kS(a)-q(a)a_1,\\
(FN10)&&p(S(a)) = 0,
\end{eqnarray*}
for all $a$, $b\in A$.
\end{definition}

\begin{definition} (see Definition 2.5 in \cite{H1})
Let $(A, \circ)$ be a Novikov algebra. A linear map $T:A\rightarrow A$ is called  a \textbf{quasicentroid} if
$T$ satisfies
\begin{eqnarray}
&&T(a\circ b)=T(a)\circ b,\\
&&T(a\circ b)-T(b\circ a)=a\circ T(b)-b\circ T(a), ~~~\text{for all $a$, $b\in A$.}
\end{eqnarray}
\end{definition}

\begin{remark}
For any $b\in A$, there is a quasicentroid $T_b$ associated to it defined by $T_b(a)=a\circ b$ for all $a\in A$. We call $T_b$
an \textbf{inner quasicentroid} of $(A, \circ)$.
\end{remark}

Finally, let us recall some definitions about Gel'fand-Dorfman bialgebras.
\begin{definition} (see \cite{Xu1})
A \textbf{Gel'fand-Dorfman bialgebra} is a vector space $A$ together with two bilinear operations $[\cdot, \cdot]$ and $\circ$ such that $(A,\circ )$ is a Novikov algebra, $(A,[\cdot,\cdot])$ is a Lie algebra, and they satisfy the following condition:
\begin{eqnarray*}
&&[a,b \circ c]-[c,b \circ a]+[b,a] \circ c-[b,c] \circ a-b \circ [a,c]=0,
\end{eqnarray*}
for all $a$, $b$, $c\in A$. Denote it by $(A, \circ, [\cdot,\cdot])$.

Let $(A, \circ, [\cdot, \cdot])$ and $(B, \circ, [\cdot, \cdot])$ be two Gel'fand-Dorfman bialgebras. If a linear map $\varphi: A\rightarrow B$ is both a homomorphism of Lie algebras and a homomorphism of Novikov algebras, then $\varphi$ is called a {\bf homomorphism of Gel'fand-Dorfman bialgebras}.

Let $T\subseteq A$ be a subspace of $A$. If $(T, [\cdot, \cdot])$ is a subalgebra (ideal) of $(A, [\cdot, \cdot])$ and $(T, \circ)$ is a Novikov subalgebra ( a bi-sided ideal) of $(A, \circ)$, then $(T, \circ, [\cdot,\cdot])$ is called a {\bf subalgebra} ({\bf ideal}) of Gel'fand-Dorfman bialgebra $(A, \circ, [\cdot, \cdot])$.
\end{definition}

\begin{definition}
Let $(A, \circ, [\cdot, \cdot])$ be a Gel'fand-Dorfman bialgebra and $V$ be a vector space, together with a bilinear map  $\triangleright $: $A \times V \to V$ and two linear maps $l_A$,$\ r_A$: $A \to gl(V)$. Then $(V,\triangleright,l_A, r_A)$ is called a \textbf{left module} of $(A, \circ, [\cdot, \cdot])$ if $(V,\triangleright)$ is a left module of Lie algebra $(A, [\cdot,\cdot])$, $(V, l_A, r_A)$ is a bimodule of Novikov algebra $(A, \circ)$, and
\begin{eqnarray*}
&&a\triangleright(l_A(b)v)+(b\circ a)\triangleright v+l_A([b,a])v-r_A(a)(b\triangleright v)-l_A(b)(a \triangleright v)=0,\\
&&a\triangleright(r_A(b)v)-b\triangleright (r_A(a)v)-r_A(b)(a\triangleright v)+r_A(a)(b\triangleright v)-r_A([a,b])(v) =0,
\end{eqnarray*}
for all $a$, $b\in A$, $v\in V$.

Similarly,  $(V, \triangleleft, l_A, r_A)$ is called a \textbf{right module} of $(A, \circ, [\cdot, \cdot])$, if $(V,\triangleleft)$ is a right module of Lie algebra $(A, [\cdot,\cdot])$, $(V, l_A, r_A)$ is a bimodule of Novikov algebra $(A, \circ)$, and
\begin{eqnarray*}
&&(l_A(b)v)\triangleleft a+v\triangleleft(b\circ a)-l_A([b,a])v-r_A(a)(v\triangleleft b)-l_A(b)(v\triangleleft a)=0,\\
&&(r_A(b)v)\triangleleft a-(r_A(a)v)\triangleleft b-r_A(b)(v\triangleleft a)+r_A(a)(v\triangleleft b)+r_A([a,b])v=0,
\end{eqnarray*}
for all $a$, $b\in A$, $v\in V$.
\end{definition}
\begin{definition}
Let $(A, \circ, [\cdot, \cdot])$ be a Gel'fand-Dorfman bialgebra, $E$ be a vector space such that $A$ is a subspace of $E$. Let $(E, \circ, [\cdot,\cdot])$ and $(E, \circ^{'}, [\cdot,\cdot]^{'})$ be two Gel'fand-Dorfman bialgebraic structures on $E$ containing $(A, \circ, [\cdot, \cdot])$ as a subalgebra. If there is an isomorphism of Gel'fand-Dorfman bialgebras $\varphi: (E, \circ, [\cdot,\cdot])\rightarrow (E, \circ^{'}, [\cdot,\cdot]^{'})$ whose restriction on $A$ is the identity map, then we call that $(E, \circ, [\cdot,\cdot])$ and $(E, \circ^{'}, [\cdot,\cdot]^{'})$ are {\bf equivalent}. We denote it by $(E, \circ , [\cdot,\cdot] )\equiv ( E, \circ^{'}, [\cdot,\cdot]^{'})$.
\end{definition}

It is easy to see that $\equiv$ is an equivalence relation on the set of all Gel'fand-Dorfman bialgebraic structures on $E$ containing $(A,\circ, [\cdot,\cdot])$ as a subalgebra. Denote the set of all equivalence classes via $\equiv$ by $\text{Extd}(E,A)$. Therefore, for studying the extending structures problem, we only need to characterize $\text{Extd}(E,A)$.

\section{unified products for Gel'fand-Dorfman bialgebras}
In this section, we introduce a definition of unified product for Gel'fand-Dorfman bialgebras. Using this tool, we give a characterization of $\text{Extd}(E,A)$.
\begin{definition}
Let $(A,\circ,[\cdot,\cdot])$ be a Gel'fand-Dorfman bialgebra and $V$ a vector space. An {\bf extending datum }of $(A,\circ,[\cdot,\cdot])$  by $V$ is a system $\Omega(A,V) = (l_A, r_A, l_V, r_V, f, \ast, \triangleleft, \triangleright, h, \{\cdot,\cdot\})$ consisting of four linear maps and six bilinear maps as follows:
\begin{eqnarray*}
&&f: V \times V \to A,~~~\ast: V \times V \to V,\\
&&l_A, r_A: A \to gl(V),~~~l_V, r_V: V \to gl(A),\\
&&\triangleleft: V \times A \to V, ~~~\triangleright: V \times A \to A,~~~ h: V \times V \to A, ~~~\{\cdot,\cdot\}: V \times V \to V.
\end{eqnarray*}
Let $\Omega(A, V) = (l_A, r_A, l_V, r_V, f, \ast, \triangleleft, \triangleright, h, \{\cdot,\cdot\})$ be an extending datum of a Gel'fand-Dorfman bialgebra $A$ by a vector space $V$. We denote by $A \natural_{\Omega(A,V)} V = A \natural V$ the vector space $A \times V$ together with the bilinear operations $ \circ $ and $[\cdot,\cdot]$ defined by:
\begin{eqnarray*}
&&(a,x) \circ (b,y)=(a \circ b+ l_V(x) b+ r_V(y)a+f(x,y),x*y+ l_A(a) y+ r_A(b)x),\\
&&[(a,x),(b,y)]=([a,b]+x\triangleright b-y\triangleright a+h(x,y),\{x,y\}+x\triangleleft b-y\triangleleft a),
\end{eqnarray*}
for all $a$, $b\in A$, $x$, $y \in V$. The object $A \natural V$ is called the {\bf unified product} of $(A, \circ, [\cdot,\cdot])$ and $\Omega(A, V)$ if it is a Gel'fand-Dorfman bialgebra with the products given above. In this case, the extending datum $\Omega(A, V)$ is called a {\bf Gel'fand-Dorfman extending structure of $(A, \circ, [\cdot,\cdot])$ by $V$}. We denote by  $\mathcal{GD}(A,V)$ the set of all Gel'fand-Dorfman extending structures of $(A, \circ, [\cdot,\cdot])$ by $V$.
\end{definition}
\begin{theorem}\label{t1}
Let $(A, \circ, [\cdot, \cdot])$ be a Gel'fand-Dorfman bialgebra, $V$ be a vector space and $\Omega(A, V)= (l_A, r_A, l_V, r_V, f,\ast,\triangleleft, \triangleright, h, \{\cdot,\cdot\})$ an extending datum of $(A, \circ, [\cdot, \cdot])$ by $V$. Then $A \natural V$ is a unified product if and only if the following conditions hold:
\begin{eqnarray*}
(G0)&&\text{$(l_A, r_A, l_V, r_V, f, \ast)$ is a Novikov extending datum of $(A, \circ)$ by $V$,} \\
&&
\text{$(\triangleleft,\triangleright,h,\{\cdot,\cdot\})$ is a Lie extending datum of $(A, [\cdot,\cdot])$ by $V$,}\\
(G1)&&[a,r_V(x)b]-(l_A(b)x)\triangleright a-x\triangleright (b\circ a)+r_V(x)([b,a])+(x\triangleright b)\circ a\\
&&+l_V(x\triangleleft b)a+b\circ (x\triangleright a)+r_V(x \triangleleft a)b=0,\\
(G2)&&(l_A(b)x)\triangleleft a+x\triangleleft (b\circ a)-l_A([b,a])x-r_A(a)(x\triangleleft b)-l_A(b)(x\triangleleft a)=0,\\
(G3)&&[a,l_V(x)b]-(r_A(b)x)\triangleright a-[b,l_V(x)a]+(r_A(a)x)\triangleright b\\
&&+(x\triangleright a)\circ b+l_V(x\triangleleft a)b-(x\triangleright b)\circ a-l_V(x\triangleleft b)a-l_V(x)([a,b])=0,\\
(G4)&&(r_A(b)x)\triangleleft a-(r_A(a)x)\triangleleft b-r_A(b)
(x\triangleleft a)+r_A(a)(x\triangleleft b)+r_A([a,b])x=0,\\
(G5)&&[a,f(x,y)]-(x\ast y)\triangleright a-y\triangleright (l_V(x)a)-h(y,r_A(a)x)+r_V(y)(x\triangleright a)\\
&&+f(x\triangleleft a,y)-h(x,y)\circ a-l_V(\{x,y\})a+l_V(x)(y\triangleright a)+f(x,y\triangleleft a)=0,\\
(G6)&&(x\ast y)\triangleleft a+\{y,r_A(a)x\}+y\triangleleft (l_V(x)a)-(x\triangleleft a)\ast y-l_A(x\triangleright a)y\\
&&+r_A(a)(\{x,y\})-x\ast (y\triangleleft a)-r_A(y\triangleright a)x=0,\\
(G7)&&x\triangleright (r_V(y)a)+h(x,l_A(a)y)-y\triangleright (r_V(x)a)-h(y,l_A(a)x)-r_V(y)(x\triangleright a)\\
&&-f(x\triangleleft a,y)+r_V(x)(y\triangleright a)+f(y\triangleleft a,x)-a\circ h(x,y)-r_V(\{x,y\})a=0,\\
(G8)&&\{x,l_A(a)y\}+x\triangleleft (r_V(y)a)-\{y,l_A(a)x\}-y\triangleleft(r_V(x)a)-(x\triangleleft a)\ast y\\
&&-l_A(x\triangleright a)y+(y\triangleleft a)\ast x+l_A(y\triangleright a)x-l_A(a)\{x,y\}=0,\\
(G9)&&x\triangleright f(y,z)+h(x,y\ast z)-z\triangleright f(y,x)\\
&&-h(z,y\ast x)+r_V(z)(h(y,x))+f(\{y,x\},z)\\
&&-r_V(x)(h(y,z))-f(\{y,z\},x)-l_V(y)(h(x,z))-f(y,\{x,z\})=0,\\
(G10)&&\{ x,y\ast z\}+x\triangleleft f(y,z)+\{ z,y\ast x\}-z\triangleleft f(y,x)+\{y,x\}\ast z\\
&&+l_A(h(y,x))z-\{y,z\}\ast x-l_A(h(y,z))x-y\ast \{x,z\}-r_A(h(x,z))y=0,
\end{eqnarray*}
for all $a$, $b\in A$, $x$, $y$, $z\in V$.
\end{theorem}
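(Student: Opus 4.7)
My plan is to split the Gel'fand-Dorfman bialgebra axioms on $A \natural V$ into three independent families of identities: the Novikov axioms for $(A \natural V, \circ)$, the Lie axioms for $(A \natural V, [\cdot,\cdot])$, and the Gel'fand-Dorfman compatibility relating them. The first two are already settled by the unified-product theories recalled in Section 2: by the result of \cite{H1}, the Novikov axioms for the product $\circ$ on $A \natural V$ are equivalent to $(l_A, r_A, l_V, r_V, f, \ast)$ being a Novikov extending datum of $(A,\circ)$ by $V$, and by \cite{AM2} the Lie axioms for $[\cdot,\cdot]$ on $A \natural V$ are equivalent to $(\triangleleft, \triangleright, h, \{\cdot,\cdot\})$ being a Lie extending datum of $(A, [\cdot,\cdot])$ by $V$. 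Together these yield (G0), so the real work is to determine when the compatibility
$$C(X,Y,Z) := [X, Y\circ Z] - [Z, Y\circ X] + [Y,X]\circ Z - [Y,Z]\circ X - Y\circ [X,Z] = 0$$
holds for all $X, Y, Z \in A \natural V$.

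By trilinearity it suffices to check $C(X,Y,Z) = 0$ when each of $X, Y, Z$ has the form $(a,0)$ with $a\in A$ or $(0,x)$ with $x\in V$, giving $2^3 = 8$ cases. The all-$A$ case is just the Gel'fand-Dorfman compatibility on $A$ itself and so is free. A simple observation that halves the bookkeeping is that $C$ is antisymmetric in its first and third slots: direct inspection gives $C(X,Y,Z) + C(Z,Y,X) = -Y\circ([X,Z]+[Z,X]) = 0$. Consequently the mixed cases $(a,b,z)$ and $(x,b,c)$ carry the same information, as do $(a,y,z)$ and $(x,y,c)$, while the case $(x,b,z)$ collapses to a single antisymmetrized identity. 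This leaves exactly five independent cases, which I expect to produce the pairs $((G1),(G2))$, $((G3),(G4))$, $((G5),(G6))$, $((G7),(G8))$, $((G9),(G10))$ upon projection to the $A$- and $V$-components respectively.

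Concretely I would begin with the case $X = (a,0)$, $Y = (b,0)$, $Z = (0,x)$ as a template: expand $(b,0)\circ(0,x)$, $(0,x)\circ(a,0)$ and the four analogous elementary products using the defining formulas for $\circ$ and $[\cdot,\cdot]$ on $A\natural V$, assemble the five terms of $C$, and separate into $A$- and $V$-parts. A short calculation shows that the $A$-component is precisely (G1) and the $V$-component is precisely (G2). Running the same procedure through the cases $((0,x),(b,0),(c,0))$, $((a,0),(0,y),(0,z))$, $((0,x),(b,0),(0,z))$, and $((0,x),(0,y),(0,z))$ produces (G3)--(G10). The converse direction is then automatic: assuming (G0)--(G10), the same case analysis combined with trilinearity shows $C(X,Y,Z) = 0$ on all of $A\natural V$.

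The main obstacle is purely organizational rather than conceptual: each case generates roughly eight to ten terms per component, and the sign bookkeeping is delicate — particularly in the antisymmetric case $((0,x),(b,0),(0,z))$, where contributions must be paired across the swap $x \leftrightarrow z$ before the identity takes the clean form (G7)/(G8). No genuinely new idea beyond systematic expansion, the antisymmetry observation on $C$, and invocation of (G0) for the ambient Novikov and Lie structure is needed.
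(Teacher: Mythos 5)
Your overall strategy is exactly the paper's: establish (G0) by quoting the unified-product theorems of \cite{H1} and \cite{AM2}, then reduce the Gel'fand-Dorfman compatibility $C(X,Y,Z)=0$ on $A\natural V$ by trilinearity to triples of homogeneous elements, discard the all-$A$ triple, and use the antisymmetry $C(X,Y,Z)=-C(Z,Y,X)$ (which the paper invokes as ``(\ref{eq1}) also holds if $((a,x),(b,y),(c,z))$ is changed to $((c,z),(b,y),(a,x))$'') to cut the remaining seven homogeneous types down to five, each splitting into an $A$-component and a $V$-component giving the pairs $(G1)$--$(G10)$.

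However, your explicit case list contains a genuine slip that, taken literally, breaks the argument for $(G3)$ and $(G4)$. By your own antisymmetry observation, the triple $((0,x),(b,0),(c,0))$ is equivalent (up to sign and relabelling) to $((a,0),(b,0),(0,x))$: both are of type $(A,A,V)$ in the outer slots with an $A$-element in the middle, so the second item of your list merely reproduces $(G1)$ and $(G2)$ and cannot produce $(G3)$, $(G4)$. The class you have omitted --- and which your abstract count of ``five independent cases'' implicitly contains, since the types $(A,V,A)$ and $(V,A,V)$ are each self-paired under the swap of the outer slots --- is the triple with the $V$-element in the middle and $A$-elements on both sides, i.e.\ $((a,0),(0,x),(b,0))$. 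This is precisely the paper's third triple, and it is the one whose $A$- and $V$-components yield $(G3)$ and $(G4)$; without it, neither direction of the equivalence is established for that type (the forward direction never derives $(G3)$, $(G4)$, and the converse never verifies compatibility on such triples). Replacing $((0,x),(b,0),(c,0))$ by $((a,0),(0,x),(b,0))$ in your list makes your proof coincide with the paper's, which checks exactly the six triples $((a,0),(b,0),(c,0))$, $((a,0),(b,0),(0,x))$, $((a,0),(0,x),(b,0))$, $((a,0),(0,x),(0,y))$, $((0,x),(a,0),(0,y))$, $((0,x),(0,y),(0,z))$.
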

\begin{proof}
By Theorem 3.2 in \cite{AM2} and Theorem 3.2 in \cite{H1},  $(A\natural V, [\cdot,\cdot])$ is a Lie algebra if and only if $(\triangleleft, \triangleright, h, \{\cdot,\cdot\})$ is a Lie extending datum of $(A, [\cdot,\cdot])$ by $V$, and $(A\natural V, \circ)$ is a Novikov algebra if and only if $(l_A, r_A, l_V, r_V, f, \ast)$ is a Novikov extending datum of $(A, \circ)$ by $V$. Then we only need to check that
\begin{eqnarray}
&&\label{eq1}[(a, x) , (b, y)\circ(c, z)]-[(c, z),(b, y)\circ(a, x)]+[(b, y),(a, x)]\circ(c, z)\\
&&-[(b, y),(c, z)]\circ(a, x)-(b, y)\circ[(a, x),(c, z)]=0\nonumber
\end{eqnarray}
for all $a$, $b$, $c\in A$, $x$, $y$, $z\in V$ if and only if $(G1)$-$(G10)$ hold.

Note that $(a, x) = (a, 0) + (0, x)$ in $A\times V$ and (\ref{eq1}) also holds if $((a,x), (b,y), (c,z))$ is changed to $((c,z), (b,y), (a,x))$. Thus, (\ref{eq1}) holds for all $a$, $b$, $c\in A$, $x$, $y$, $z\in V$ if and only if it holds for all triples $((a, 0), (b, 0), (c, 0))$, $((a, 0), (b, 0), (0, x))$, $((a, 0), (0, x), (b, 0))$, $((a, 0), (0, x), (0, y))$, $((0, x), (a, 0),$ $ (0, y))$, and $((0, x), (0, y)$, $(0, z))$, where $a$, $b$, $c\in A$, $x$, $y$, $z\in V$. Obviously, (\ref{eq1}) holds for the triple $((a, 0), (b, 0), (c, 0))$, since $(A, \circ, [\cdot,\cdot])$ is a Gel'fand-Dorfman bialgebra. Then by some computations, we can obtain that (\ref{eq1}) holds for the triple $((a, 0), (b, 0), (0, x))$ if and only if $(G1)$ and $(G2)$ hold, (\ref{eq1}) holds for the triple $((a, 0), (0, x), (b, 0))$ if and only if $(G3)$ and $(G4)$ hold, (\ref{eq1}) holds for the triple $((a, 0), (0, x), (0, y))$ if and only if $(G5)$ and $(G6)$ hold, (\ref{eq1}) holds for the triple $((0, x), (a, 0), (0, y))$ if and only if $(G7)$ and $(G8)$ hold, and (\ref{eq1}) holds for the triple $((0, x), (0, y), (0, z))$ if and only if $(G9)$ and $(G10)$ hold. Then the proof is finished.
\end{proof}

\begin{remark}
In fact, $(G0)$, $(G2)$ and $(G4)$ mean that $(V, \triangleleft, l_A, r_A)$ is a right module of $(A, \circ, [\cdot,\cdot])$.
\end{remark}

\begin{example}\label{examp1}
Let $\Omega(A, V)= (l_A, r_A, l_V, r_V, f, \ast, \triangleleft, \triangleright, h, \{\cdot,\cdot\})$ be a Gel'fand-Dorfman extending datum of $(A, \circ, [\cdot,\cdot])$ by a vector space $V$, where $l_A$, $r_A$ and $\triangleleft$ are trivial maps. For the sake of simplicity, we denote this extending datum by $\Omega(A, V)= (l_V, r_V, f, \ast, \triangleright, h, \{\cdot,\cdot\})$. Then $\Omega(A, V)$ is a Gel'fand-Dorfman extending structure of  $(A, \circ, [\cdot,\cdot])$ by $V$ if and only if $(l_V, r_V, f, \ast)$ is a Novikov extending structure of $(A, \circ)$ by $V$, $ (\triangleright, h, \{\cdot,\cdot\})$ is a Lie extending structure of $(A, [\cdot,\cdot])$ by $V$, $(V,\ast, \{\cdot,\cdot\})$ is a Gel'fand-Dorfman bialgebra and the following conditions hold:
\begin{eqnarray*}
&&[a,r_V(x)b]-x\triangleright(b\circ a)+r_V(x)[b,a]+(x\triangleright b)\circ a+b\circ (x\triangleright a)=0,\\
&&[a,l_V(x)b]-[b,l_V(x)a]+(x\triangleright a)\circ b-(x\triangleright b)\circ a-l_V(x)[a,b]=0,\\
&&[a,f(x,y)]-(x\ast y)\triangleright a-y\triangleright(l_V(x)a)+r_V(y)(x\triangleright a)-h(x,y)\circ a\\
&&-l_V(\{x,y\})a+l_V(x)(y\triangleright a)=0,\\
&&x\triangleright (r_V(y)a)-y\triangleright(r_V(x)a)-r_V(y)(x\triangleright a)+r_V(x)(y\triangleright a)\\
&&-a\circ h(x,y)-r_V(\{x,y\})a=0,\\
&&x\triangleright f(y,z)+h(x,y\ast z)-z\triangleright f(y,x)-h(z,y\ast x)+r_V(z)(h(y,x))\\
&&+f(\{y,x\},z)-r_V(x)(h(y,z))-f(\{y,z\},x)\\
&&-l_V(y)(h(x,z))-f(y,\{x,z\})=0,
\end{eqnarray*}
for all $a$, $b\in A$, $x$, $y$, $z\in V$. In this case, the associated unified product $A\natural V$ denoted by $A\diamond V$ is called the {\bf crossed product} of $(A, \circ, [\cdot,\cdot])$ and $(V, \ast, \{\cdot, \cdot\})$. The crossed product associated to $(A, V, l_V, r_V, f, \triangleright, h)$ satisfying the compatibility conditions above is the Gel'fand-Dorfman bialgebra defined as follows:
\begin{eqnarray*}
&&(a,x)\circ(b,y)=(a\circ b+ l_V(x)b+ r_V(y)a+f(x,y),x\ast y),\\
&&[(a,x),(b,y)]=([a,b]+x\triangleright b-y\triangleright a+h(x,y),\{x,y\}),
\end{eqnarray*}
for all $a$, $b\in A$, $x$, $y\in V$.

Note that $(A, \circ, [\cdot,\cdot])$ is an ideal of $(A\diamond V, \circ, [\cdot, \cdot])$, since $[(a,0),(b,y)]=([a,b]-y\triangleright a,0)$ , $(a,0)\circ(b,y)=(a\circ b+ r_V(y)a, 0)$ and $(a,x)\circ(b,0)=(a\circ b+ l_V(x)b,0)$.
\end{example}
\begin{example}\label{examp2}
Let $\Omega(A, V)= (l_A, r_A, l_V, r_V, f, \ast, \triangleleft, \triangleright, h, $ $\{\cdot,\cdot\})$ be a Gel'fand-Dorfman extending datum of $(A, \circ, [\cdot,\cdot])$ by a vector space $V$, where $f$ and $h$ are trivial maps. We denote this extending datum by $\Omega(A, V)= (l_A, r_A, l_V, r_V, \ast, \triangleleft, \triangleright$, $\{\cdot,\cdot\})$. Then $\Omega(A, V)$ is a Gel'fand-Dorfman extending structure of $(A, \circ, [\cdot,\cdot])$ by $V$ if and only if $(l_A, r_A, l_V, r_V, \ast)$ is a Novikov extending structure of $(A, \circ)$ by $V$, $ (\triangleleft, \triangleright, \{\cdot,\cdot\})$ is a Lie extending structure of $(A, [\cdot,\cdot])$ by $V$, $(V,\ast, \{\cdot,\cdot\})$ is a Gel'fand-Dorfman bialgebra, $(V, \triangleleft, l_A, r_A)$ is a right module of $(A, \circ, [\cdot,\cdot])$, $(A, \triangleright, l_V, r_V)$ is a left module of $(V,\ast,\{\cdot,\cdot\})$ and they satisfy  $(G1)$, $(G3)$, $(G6)$ and $(G8)$.
In this case, the associated unified product $A\natural V$ denoted by $A\bowtie V$ is called the {\bf bicrossed product} of Gel'fand-Dorfman bialgebras $(A, \circ, [\cdot, \cdot])$ and $(V, \ast, \{\cdot,\cdot\})$ associated with the matched pair $(l_A, r_A, \triangleleft, l_V, r_V, \triangleright)$. The bicrossed product associated to $(l_A, r_A, \triangleleft, l_V, r_V, \triangleright)$ satisfying the compatibility conditions above is the Gel'fand-Dorfman bialgebra defined as follows:
\begin{eqnarray*}
&&(a,x)\circ(b,y)=(a\circ b+ l_V(x)b+ r_V(y)a, x\ast y+ l_A(a)y+ r_A(b)x),\\
&&[(a,x),(b,y)]=([a,b]+x\triangleright b-y\triangleright a, \{x,y\}+x\triangleleft b-y\triangleleft a),
\end{eqnarray*}
for all $a$, $b\in A$, $x$, $y\in V$.
Note that $(A, \circ, [\cdot, \cdot])$ and $(V, \ast, \{\cdot,\cdot\})$ are both subalgebras of $A\bowtie V$, since $(a,0)\circ(b,0)=(a\circ b, 0)$, $[(a,0),(b,0)]=([a,b],0)$, $(0,x)\circ(0,y)=(0, x\ast y)$ and $[(0,x), (0,y)]=(0,\{x,y\})$.
\end{example}
\begin{theorem}\label{tt2}
Let $(A, \circ, [\cdot,\cdot])$ be a Gel'fand-Dorfman bialgebra, $E$ a vector space containing $A$ as a subspace and $(E, \circ, [\cdot,\cdot])$ a Gel'fand-Dorfman bialgebraic structure on $E$ such that $(A, \circ, [\cdot,\cdot])$ is a subalgebra of $(E, \circ, [\cdot,\cdot])$. Then there exists a Gel'fand-Dorfman extending structure $\Omega(A,V)=(l_A, r_A, l_V, r_V, f, \ast, \triangleleft, \triangleright, h, \{\cdot,\cdot\})$ of $(A, \circ$, $[\cdot,\cdot])$ by a subspace $V$ of $E$ such that $( E, \circ, [\cdot,\cdot]) \cong A\natural V$ which is an isomorphism of Gel'fand-Dorfman bialgebras whose restriction on $A$ is the identity map.
\end{theorem}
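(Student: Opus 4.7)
The plan is to recover the extending datum directly from a choice of vector-space complement of $A$ in $E$, and then to use the identity of $E$ (written along that decomposition) as the required isomorphism from the unified product. First, since $A$ is a subspace of $E$, I choose a complementary subspace $V$, so that $E = A \oplus V$ as vector spaces; let $\pi_A : E \to A$ and $\pi_V : E \to V$ be the associated projections. The six Novikov-type maps of $\Omega(A,V)$ are then read off from the product $\circ$ of $E$ between the two factors by setting
\begin{align*}
l_V(x)b &:= \pi_A(x\circ b), & r_A(b)x &:= \pi_V(x\circ b), \\
r_V(y)a &:= \pi_A(a\circ y), & l_A(a)y &:= \pi_V(a\circ y), \\
f(x,y) &:= \pi_A(x\circ y), & x\ast y &:= \pi_V(x\circ y),
\end{align*}
for all $a,b \in A$ and $x,y \in V$, and the four Lie-type maps are defined analogously from the bracket of $E$ by
\begin{align*}
x\triangleright b &:= \pi_A([x,b]), & x\triangleleft b &:= \pi_V([x,b]), \\
h(x,y) &:= \pi_A([x,y]), & \{x,y\} &:= \pi_V([x,y]).
\end{align*}

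Next, consider the linear bijection $\varphi : A \times V \to E$ given by $\varphi(a,x) = a + x$, whose restriction to the $A$-factor is the identity on $A$. The key step is to verify that when the operations of $E$ are pulled back along $\varphi$, they coincide exactly with the unified-product operations attached to $\Omega(A,V)$ in Definition 3.1. This is a direct computation: since $A$ is a subalgebra of $E$ one has $a\circ b \in A$ and $[a,b]\in A$, so expanding $(a+x)\circ(b+y)$ and $[a+x, b+y]$ bilinearly and splitting each cross-term into its $A$- and $V$-components via $\pi_A$ and $\pi_V$ reproduces precisely the unified-product formulas
\begin{align*}
(a,x)\circ(b,y) &= \bigl(a\circ b + l_V(x)b + r_V(y)a + f(x,y),\ x\ast y + l_A(a)y + r_A(b)x\bigr), \\
[(a,x),(b,y)] &= \bigl([a,b] + x\triangleright b - y\triangleright a + h(x,y),\ \{x,y\} + x\triangleleft b - y\triangleleft a\bigr).
\end{align*}

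Finally, since the transported structure on $A \times V$ is a Gel'fand-Dorfman bialgebra isomorphic to $E$ via $\varphi$, Theorem \ref{t1} immediately forces the datum $\Omega(A,V)$ to satisfy conditions $(G0)$--$(G10)$; hence $\Omega(A,V)$ is a Gel'fand-Dorfman extending structure and $A\natural V \cong E$ via $\varphi$, whose restriction to $A$ is the identity. There is no serious obstacle here: the only step requiring any calculation is the bilinear expansion matching the two displayed formulas above, and once this identification is made, all the extending-structure axioms come for free from the fact that $E$ itself is a Gel'fand-Dorfman bialgebra.
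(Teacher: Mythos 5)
Your proposal is correct and follows essentially the same route as the paper: choose a complement $V$ of $A$ in $E$ (equivalently, a retraction $p:E\to A$ with $V=\ker p$), define the ten structure maps as the $A$- and $V$-components of the cross products, and check that $\varphi(a,x)=a+x$ transports the operations of $E$ into exactly the unified-product formulas, so that Theorem \ref{t1} forces $(G0)$--$(G10)$. Your observation that the extending-structure axioms then come ``for free'' from $E$ being a Gel'fand-Dorfman bialgebra is precisely the content the paper delegates to the analogous proofs in the Lie and Novikov cases.
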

\begin{proof}
Obviously, there exists a natural linear map $p: E \to A$ such that $p(a)=a$ for all $a\in A$. Let $V= Ker(p)$ which is a subspace of $E$ and a complement of $A$ in $E$. Then we define the extending datum of $(A, \circ, [\cdot,\cdot])$ by $V$ as follows:
\begin{eqnarray*}
&&l_A: A \to gl(V),~~~~ l_A (a)x:=a\circ x- p(a\circ x),\\
&&r_A: A \to gl(V), ~~~~r_A(a)x:=x\circ a- p(x\circ a),\\
&&l_V: V \to gl(A), ~~~~l_V(x)a:= p(x\circ a),\\
&&r_V: V \to gl(A), ~~~~r_V(x)a:= p(a\circ x),\\
&&f: V \times V \to A,~~~~ f(x,y):=p(x\circ y),\\
&&\ast: V \times V \to V, ~~~~x\ast y:= x\circ y- p(x\circ y),\\
&&\triangleleft: V \times A \to V,~~~~ x\triangleleft a:= [x,a]-p([x,a]),\\
&&\triangleright: V \times A \to A,~~~~ x\triangleright a:=p([x,a]),\\
&&h: V \times V \to A,~~~~ h(x,y):=p([x,y]),\\
&&\{\cdot,\cdot\}: V\times V \to V,~~~~ \{x,y\}:=[x,y]-p([x,y]),
\end{eqnarray*}
for all $a \in A$, $x, y\in V$.
By referring to the proof in Theorem 3.4 in \cite{AM2} and the proof in Theorem 3.9 in \cite{H1}, it is easy to prove that $\Omega(A, V) = (l_A, r_A, l_V, r_V, f, \ast, \triangleleft, \triangleright, h,$ $ \{\cdot,\cdot\})$ is a Gel'fand-Dorfman extending structures of $(A, \circ, [\cdot,\cdot])$ by $V$ and the linear map $\varphi:  A\natural V\to(E,\circ, [\cdot,\cdot])$, where $\varphi(a,x):=a+x$, is an isomorphism of Gel'fand-Dorfman bialgebras whose restriction on $A$ is the identity map.
\end{proof}

\begin{remark}
By Theorem \ref{tt2} and Example \ref{examp1}, the crossed products $A\diamond V$ describe all Gel'fand-Dorfman bialgebraic structures on the vector space $E=A\oplus V$ such that $(A, \circ, [\cdot,\cdot])$ is an ideal of $E$, where $V$ is a complement of $A$ in $E$.

Similarly, by Theorem \ref{tt2} and Example \ref{examp2}, the bicrossed products $A\bowtie V$ describe all Gel'fand-Dorfman algebraic structures on the vector space $E=A\oplus V$ such that $E$ contains $(A, \circ, [\cdot,\cdot])$ and $(V, \ast, [\cdot,\cdot])$ as two subalgebras.
\end{remark}

By Theorem \ref{tt2}, in order to classify all Gel'fand-Dorfman bialgebraic structures on $E$ containing $A$ as a subalgebra up to an isomorphism whose restriction on $A$ is the identity map, we only need to investigate the isomorphisms between different unified products whose restrictions on $A$ are the identity map.

\begin{lemma}\label{lemm1}
Let $(A, \circ, [\cdot,\cdot])$ be a Gel'fand-Dorfman bialgebra, $V$ a vector space, $\Omega(A, V) = (l_A, r_A, l_V, r_V, f, \ast, \triangleleft, \triangleright, h, \{\cdot,\cdot\})$ and $\Omega^{'}(A, V) = (l_A^{'},r_A^{'},l_V^{'},r_V^{'},f^{'},\ast^{'}$, $\triangleleft^{'},\triangleright^{'},h^{'},\{\cdot,\cdot\}^{'})$ be two Gel'fand-Dorfman extending structures of $(A, \circ, [\cdot,\cdot])$ by $V$. Set $A \natural V$ and $A \natural^{'} V$ be the unified products associated to the extending structures $\Omega(A, V) $ and $ \Omega^{'} (A, V)$ respectively. Then there exists a bijection with respect to the correspondence $\varphi_{\lambda, \mu}(a,x)=(a+ \lambda(x), \mu(x))$ between the set of all isomorphisms
 of Gel'fand-Dorfman bialgebras $\varphi: A \natural V \to A \natural^{'} V $ whose restriction on $A$ is the identity map and the set of pairs $(\lambda, \mu)$, where $\lambda: V\rightarrow A$ is a linear map, $\mu: V\rightarrow V$ is a linear isomorphism and they satisfy the following compatibility conditions:
\begin{eqnarray*}
(D1)&&r_A(a)x=\mu^{-1}(r_A^{'}(a) \mu(x)),\\
(D2)&&l_A(a)x=\mu^{-1}(l_A^{'}(a) \mu(x)),\\
(D3)&&l_V(x)a=\lambda(x)\circ a+l_V^{'}(\mu(x))a-\lambda\mu^{-1}(r_A^{'}(a) \mu(x)),\\
(D4)&&r_V(x)a=a\circ\lambda(x)+ r_V^{'}(\mu(x))a-\lambda\mu^{-1}(l_A^{'}(a) \mu(x)),\\
(D5)&&x\ast y=\mu^{-1}(\mu(x)\ast^{'}\mu(y))+ \mu^{-1}(l_A^{'}(\lambda(x)) \mu(y))+\mu^{-1}(r_A^{'}(\lambda(y)) \mu(x)),\\
(D6)&&f(x,y)=\lambda(x)\circ \lambda(y)+ l_V^{'}( \mu(x)) \lambda(y)+ r_V^{'}(\mu (y)) \lambda (x)+f^{'}(\mu(x), \mu(y))\\
&&- \lambda\mu^{-1}(\mu(x)\ast^{'} \mu(y))- \lambda\mu^{-1}(l_A^{'}(\lambda(x))\mu(y))- \lambda\mu^{-1}(r_A^{'}(\lambda(y)) \mu(x)),\\
(D7)&&x\triangleleft a=\mu^{-1}(\mu(x)\triangleleft^{'}a),\\
(D8)&&x\triangleright a=[\lambda(x),a]+ \mu(x)\triangleright^{'}a-\lambda\mu^{-1}(\mu(x)\triangleleft^{'}a),\\
(D9)&&h(x,y)=[ \lambda(x), \lambda(y)]+ \mu(x)\triangleright^{'} \lambda(y)- \mu(y)\triangleright^{'}\lambda(x)+h^{'} (\mu(x), \mu(y))\\
&&-\lambda\mu^{-1}(\{\mu(x), \mu(y)\}^{'})- \lambda\mu^{-1}(\mu(x)\triangleleft^{'}\lambda(y))+ \lambda\mu^{-1}(\mu(y)\triangleleft^{'}\lambda(x)),\\
(D10)&&\{x,y\}=\mu^{-1}(\{\mu(x), \mu(y)\}^{'})+ \mu^{-1}(\mu(x)\triangleleft^{'}\lambda(y))- \mu^{-1}(\mu(y)\triangleleft^{'}\lambda(x)),\\
\end{eqnarray*}
for all $a \in A$, $x$, $y\in V$.
\end{lemma}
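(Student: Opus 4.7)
The plan is to reduce the problem to a standard argument that combines the Lie and Novikov isomorphism characterizations. First, I would show that any linear map $\varphi: A\natural V \to A\natural' V$ whose restriction on $A$ is the identity has the form $\varphi(a,x)=(a+\lambda(x),\mu(x))$ for uniquely determined linear maps $\lambda: V\to A$ and $\mu: V\to V$: indeed $\varphi(a,x)=\varphi(a,0)+\varphi(0,x)=(a,0)+\varphi(0,x)$, and then $\lambda$, $\mu$ are the components of $\varphi(0,x)\in A\times V$. A short calculation using this block form shows that $\varphi_{\lambda,\mu}$ is a linear bijection if and only if $\mu$ is a linear bijection: one direction builds the inverse explicitly as $(a,y)\mapsto(a-\lambda\mu^{-1}(y),\mu^{-1}(y))$, and the other uses that $\varphi$ restricted to $V$ projects onto $V$ since $A\cap\mathrm{Im}(\varphi|_V)$ must intersect trivially modulo $A$.

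Next, I would check separately that $\varphi_{\lambda,\mu}$ preserves the Novikov product $\circ$ and the Lie bracket $[\cdot,\cdot]$, since the Gel'fand-Dorfman axioms are mutually independent on the two operations. For $\circ$, the equation $\varphi_{\lambda,\mu}((a,x)\circ(b,y))=\varphi_{\lambda,\mu}(a,x)\circ'\varphi_{\lambda,\mu}(b,y)$ must be verified on the four pure types $((a,0),(b,0))$, $((a,0),(0,y))$, $((0,x),(b,0))$, $((0,x),(0,y))$. The first type is automatic since $(A,\circ)$ is a subalgebra on both sides; the second and third produce (D1)--(D4) by comparing the $A$- and $V$-components (a bimodule-type condition on $\mu$ together with a $1$-cocycle-type condition on $\lambda$); the fourth produces (D5) and (D6). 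This is exactly the argument used in Theorem 3.11 of \cite{H1}, and no additional ingredient is required here.

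For the Lie bracket, the same case-split on $((a,0),(b,0))$, $((a,0),(0,y))$ and $((0,x),(0,y))$ (the remaining case being recovered by skew-symmetry) yields (D7), (D8), and the pair (D9), (D10) respectively by matching $A$- and $V$-components, which is precisely the computation of Theorem 3.4 of \cite{AM2}. Finally I would verify that the correspondence $\varphi_{\lambda,\mu}\leftrightarrow(\lambda,\mu)$ is a bijection: injectivity is immediate from the uniqueness of $\lambda,\mu$ above, and surjectivity follows from the just-established fact that (D1)--(D10) are exactly the conditions for $\varphi_{\lambda,\mu}$ to be a homomorphism, combined with the bijectivity criterion on $\mu$. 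The only real obstacle is the bookkeeping in the mixed cases $((a,0),(0,y))$ and $((0,x),(b,0))$ for $\circ$, where the $V$-valued term $l_A,r_A$ and the $A$-valued term $l_V,r_V$ must be separated cleanly on the two sides; careful tracking of which component of $\varphi_{\lambda,\mu}(b,y)=(b+\lambda(y),\mu(y))$ contributes to which slot makes (D3) and (D4) emerge without overlap.
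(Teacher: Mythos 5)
Your proposal is correct and follows essentially the same route as the paper: write $\varphi$ in the form $\varphi_{\lambda,\mu}(a,x)=(a+\lambda(x),\mu(x))$ using $\varphi|_A=\mathrm{id}_A$, then split the homomorphism condition into its Novikov part (yielding (D1)--(D6)) and its Lie part (yielding (D7)--(D10)), together with the observation that bijectivity of $\varphi_{\lambda,\mu}$ is equivalent to bijectivity of $\mu$. The only difference is that you spell out the pure-element case analysis explicitly, whereas the paper simply invokes the corresponding lemmas for Lie algebras (Lemma 3.5 in \cite{AM2}) and Novikov algebras (Lemma 3.12 in \cite{H1}).
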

\begin{proof}
Since $\varphi: A \natural V \to A \natural^{'} V $ is a homomorphism of Gel'fand-Dorfman bialgebras whose restriction on $A$ is the identity map, we can set that $\varphi_{\lambda, \mu}(a,x)=(a+ \lambda(x), \mu(x))$, where $\lambda: V\rightarrow A$  and $\mu: V\rightarrow V$ are two linear maps.
Note that a homomorphism of Gel'fand-Dorfman bialgebras is also a homomorphism of Lie algebras and  a homomorphism of Novikov algebras. Therefore this lemma follows directly from  Lemma 3.5 in \cite{AM2} and Lemma 3.12 in \cite{H1}.
\end{proof}
\begin{definition}
Let $(A, \circ, [\cdot,\cdot])$ be a Gel'fand-Dorfman bialgebra and $V$ a vector space. Two Gelf'and-Dorfman extending structures $\Omega(A, V) = (l_A, r_A, l_V, r_V, f, \ast, \triangleleft, $ $\triangleright, h, \{\cdot,\cdot\})$ and $\Omega^{'}(A, V)= (l_A^{'}, r_A^{'}, l_V^{'}, r_V^{'}, f^{'}, \ast^{'}, \triangleleft^{'}, \triangleright^{'}, h^{'}, \{\cdot,\cdot\}^{'})$ are called {\bf equivalent}, which is denoted by $\Omega(A, V)\equiv \Omega^{'}(A, V)$, if there exists a pair of linear maps $(\lambda, \mu)$, where $\lambda: V\to A$, $\mu\in Aut_K(V)$, such that $\Omega(A, V)$ can be obtained  from $\Omega'(A, V)$ by $(\lambda,\mu) $ as $(D1)$-$(D10)$ in Lemma \ref{lemm1}.
\end{definition}
\begin{theorem}\label{t2}
Let $(A, \circ, [\cdot,\cdot])$ be a Gel'fand-Dorfman bialgebra and $E$ a vector space which contains $A$ as a subspace and $V$ a complement of $A$ in $E$.  Denote $\mathcal{GH}^2(V,A):= \mathcal{GD}(A,V)/ \equiv$. Then the map
\begin{eqnarray}
\mathcal{GH}^2(V,A) \mapsto Extd(E,A),~~~~\overline{\Omega(A, V)} \to (A\natural V, \circ ,[\cdot,\cdot])
\end{eqnarray}
is bijective, where $\overline{\Omega(A, V)}$ is the equivalence class of $\Omega(A, V)$ via $\equiv$.
\end{theorem}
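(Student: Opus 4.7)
The plan is to reduce the statement directly to the two pieces of machinery already built: Theorem \ref{tt2} (every relevant structure on $E$ arises as a unified product) and Lemma \ref{lemm1} (every isomorphism of unified products fixing $A$ is of the form $\varphi_{\lambda,\mu}$ subject to $(D1)$--$(D10)$). Once these are in hand, the theorem is really an equivalence-class reformulation, and my job splits into three small tasks: well-definedness of the map, surjectivity, and injectivity.

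First I would check well-definedness. Suppose $\Omega(A,V)\equiv \Omega'(A,V)$ via a pair $(\lambda,\mu)$ satisfying $(D1)$--$(D10)$. By Lemma \ref{lemm1}, the map $\varphi_{\lambda,\mu}(a,x)=(a+\lambda(x),\mu(x))$ is a well-defined isomorphism $A\natural V\to A\natural' V$ of Gel'fand-Dorfman bialgebras whose restriction to $A$ is the identity. Hence $(A\natural V,\circ,[\cdot,\cdot])\equiv(A\natural' V,\circ',[\cdot,\cdot]')$, and the assignment $\overline{\Omega(A,V)}\mapsto (A\natural V,\circ,[\cdot,\cdot])$ descends to equivalence classes on both sides.

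Surjectivity is immediate from Theorem \ref{tt2}: given any Gel'fand-Dorfman bialgebraic structure $(E,\circ,[\cdot,\cdot])$ containing $(A,\circ,[\cdot,\cdot])$ as a subalgebra, that theorem produces a Gel'fand-Dorfman extending structure $\Omega(A,V)$ (using the distinguished complement $V=\mathrm{Ker}(p)$) together with an isomorphism $A\natural V\cong E$ that is the identity on $A$, so the class of $E$ in $\mathrm{Extd}(E,A)$ is hit by $\overline{\Omega(A,V)}$. For injectivity, suppose $\Omega(A,V)$ and $\Omega'(A,V)$ are two Gel'fand-Dorfman extending structures such that $(A\natural V,\circ,[\cdot,\cdot])\equiv(A\natural' V,\circ',[\cdot,\cdot]')$ in $\mathrm{Extd}(E,A)$. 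Then there exists an isomorphism $\varphi:A\natural V\to A\natural' V$ of Gel'fand-Dorfman bialgebras restricting to $\mathrm{id}_A$. By Lemma \ref{lemm1}, such a $\varphi$ must have the form $\varphi_{\lambda,\mu}$ for a unique pair $(\lambda,\mu)$ with $\lambda:V\to A$ linear, $\mu\in\mathrm{Aut}_K(V)$, and subject precisely to $(D1)$--$(D10)$. This is exactly the data defining $\Omega(A,V)\equiv\Omega'(A,V)$, so the two extending-structure classes coincide in $\mathcal{GH}^2(V,A)$.

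No single step is hard: the main subtlety is only to be careful that Lemma \ref{lemm1} is used in both directions — as a construction of $\varphi_{\lambda,\mu}$ from $(\lambda,\mu)$ for well-definedness, and as a recognition principle forcing every isomorphism of unified products fixing $A$ to have that shape for injectivity. The rest is bookkeeping; no further compatibility identities or new computations are required beyond those already verified in Theorem \ref{t1}, Theorem \ref{tt2}, and Lemma \ref{lemm1}.
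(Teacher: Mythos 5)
Your proposal is correct and follows essentially the same route as the paper, which simply observes that the theorem ``follows directly'' from Theorem \ref{t1}, Theorem \ref{tt2} and Lemma \ref{lemm1}; you have merely spelled out the well-definedness, surjectivity and injectivity steps that this citation compresses. No gap: using Lemma \ref{lemm1} in both directions (construction and recognition) is exactly the intended argument.
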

\begin{proof}
This conclusion follows directly from  Theorem \ref{t1}, Theorem \ref{tt2} and Lemma \ref{lemm1}.
\end{proof}
\section{Unified products $A\natural V$ when $\text{dim}(V)=1$}
In this section, using the general theory of extending structures for Gel'fand-Dorfman bialgebras developed in Section 3, we investigate the unified products $A\natural V$ in detail when $\text{dim}(V)=1$.

\begin{definition}
Let $(A, \circ, [\cdot,\cdot])$ be a Gel'fand-Dorfman bialgebra. A {\bf Gel'fand-Dorfman flag datum} of $(A, \circ, [\cdot, \cdot])$ is a $8$-tuples $(p, q, S, T, a_1, k, \eta , D)$ consisting of six linear maps: $p$, $q$, $\eta: A\to K $, $D$, $S$, $T: A \to A$ and two elements $a_1\in A$, $k\in K$ satisfying the following conditions:
\begin{eqnarray*}
(GF0)&&(p, q, S, T, a_1, k)~~\text{is  a flag datum of the Novikov algebra $(A,\circ)$}\\
&&\text{and $(\eta, D)$ is a twisted derivation of the Lie algebra $(A, [\cdot,\cdot])$},\\
(GF1)&&[a,T(b)]-p(b)D(a)-D(b\circ a)+T([b,a])+D(b)\circ a+\eta(b)S(a)\\
&&+b\circ D(a)+ \eta(a)T(b)=0,\\
(GF2)&&p([b,a])+ \eta(b)q(a)- \eta(b\circ a)=0,\\
(GF3)&&[a,S(b)]-q(b)D(a)-[b,S(a)]+q(a)D(b)+D(a)\circ b+\eta(a)S(b)\\
&&-D(b)\circ a-\eta(b)S(a)-S([a,b])=0,\\
(GF4)&&q([a,b])=0,\\
(GF5)&&[a,a_1]-kD(a)-D(S(a))+T(D(a))+2\eta(a)a_1+S(D(a))=0,\\
(GF6)&&k\eta(a)-\eta(S(a))+p(D(a))+q(D(a))=0,
\end{eqnarray*}
for all $a$, $b\in A$. Denote by $\mathcal{F}(A)$ the set of all flag datums of $(A, \circ, [\cdot,\cdot])$.
\end{definition}

\begin{proposition}\label{pro1}
Let $(A, \circ, [\cdot,\cdot])$ be a Gel'fand-Dorfman bialgebra and $V$ a vector space of dimension $1$ with a basis $\{x\}$. Then there exists a bijection between the set $\mathcal{GD}(A,V)$ of all Gel'fand-Dorfman  extending structures of $(A, \circ, [\cdot,\cdot])$ by $V$ and $\mathcal{F}(A)$ of all Gel'fand-Dorfman flag datums of $(A, \circ, [\cdot,\cdot])$.
\end{proposition}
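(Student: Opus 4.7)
The plan is to exploit that every component of an extending datum is determined by its action on a fixed basis $\{x\}$ of the one-dimensional vector space $V$. Writing $l_A(a)x = p(a)x$, $r_A(a)x = q(a)x$, $x \triangleleft a = \eta(a)x$ for scalar functions $p, q, \eta \colon A \to K$, and $l_V(x)a = S(a)$, $r_V(x)a = T(a)$, $x \triangleright a = D(a)$ for linear maps $S, T, D \colon A \to A$, and finally $f(x,x) = a_1 \in A$, $x \ast x = kx$ for some $k \in K$, I exhaust all the data; note that axiom (L1) forces $h(x,x) = \{x,x\} = 0$, so no further freedom remains. This sets up a tautological bijection between all ten-component extending data $\Omega(A,V)$ and $8$-tuples $(p, q, S, T, a_1, k, \eta, D)$, and the claim reduces to showing that, under this parameterization, the constraints of Theorem \ref{t1} correspond exactly to (GF0)-(GF6).

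For (GF0), the two halves are already available in the literature. By Corollary 3.5 of \cite{H1}, the Novikov extending datum $(l_A, r_A, l_V, r_V, f, \ast)$ corresponds to the $6$-tuple $(p, q, S, T, a_1, k)$ satisfying the Novikov flag axioms (FN1)-(FN10); by the analogous dimension-one case of the Lie result in \cite{AM2}, the Lie extending datum $(\triangleleft, \triangleright, h, \{\cdot,\cdot\})$ corresponds to the twisted derivation $(\eta, D)$ satisfying (TD1)-(TD2). Together these yield (GF0), so I may focus on the Gel'fand-Dorfman compatibility axioms (G1)-(G10).

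These I will handle by direct substitution. The single-$V$-variable axioms (G1)-(G4) translate term-by-term into (GF1)-(GF4) after reading off the coefficient of $x$ (for the $V$-component) or the $A$-component. Setting $y = x$ in (G5) turns the symmetric pair $f(x \triangleleft a, x) + f(x, x \triangleleft a)$ into $2 \eta(a) a_1$ and kills the $h(x,x)$, $\{x,x\}$ terms, recovering (GF5); setting $y = x$ in (G6) collapses the coefficient of $x$ to the identity (GF6), with the $p(D(a)) + q(D(a))$ combination arising from $l_A(x\triangleright a)x + r_A(x\triangleright a)x$. For the three-variable axioms (G7)-(G10), substituting $x = y = z$ forces every term to appear in a $\pm$ pair, which cancels by the antisymmetry of $h$ and $\{\cdot,\cdot\}$ together with the symmetric coincidence of arguments in $f$ and $\ast$; so these four axioms reduce to $0 = 0$ and impose no new constraint.

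The main obstacle will be the careful bookkeeping inside (G5) and (G6): each has roughly ten terms, and the coefficients of $D(a)$, $S(a)$, $T(a)$, $a_1$ in the resulting identity come from several sources that must be combined precisely to match (GF5) and (GF6), including the factor $2$ multiplying $\eta(a) a_1$ and the combination $p(D(a)) + q(D(a))$. Once these per-axiom translations are verified, the inverse construction (recovering the ten component maps from a flag datum) is tautological, and the desired bijection $\mathcal{GD}(A,V) \to \mathcal{F}(A)$ follows.
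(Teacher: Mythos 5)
Your proposal is correct and follows essentially the same route as the paper: parameterize the ten components of $\Omega(A,V)$ by $(p,q,S,T,a_1,k,\eta,D)$ using the basis $\{x\}$ (with $h$, $\{\cdot,\cdot\}$ forced to vanish), invoke the dimension-one Novikov and Lie results of \cite{H1} and \cite{AM2} for $(GF0)$, and check by direct substitution that $(G1)$--$(G6)$ become $(GF1)$--$(GF6)$ while $(G7)$--$(G10)$ reduce to trivial identities. The only nitpick is the reference: the dimension-one correspondences used for $(GF0)$ are Proposition 4.4 in \cite{H1} and Proposition 5.4 in \cite{AM2}, not Corollary 3.5.
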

\begin{proof}
Let $\Omega (A, V) = (l_A, r_A, l_V, r_V, f, \ast, \triangleleft, \triangleright, h, \{\cdot,\cdot\})$ be a Gel'fand-Dorfman extending structure of $(A, \circ, [\cdot,\cdot])$ by $V$.
Since $\text{dim}(V)=1$, $h=0$, $\{\cdot,\cdot\}=0$, and we can set
\begin{eqnarray*}
&&l_A(a)x=p(a)x,~~~r_A(a)x=q(a)x,~~~l_V(x)a=S(a),~~r_V(x)a=T(a), \\
&& f(x,x)=a_1,~~x\ast x=kx,~~x\triangleleft a=\eta(a)x,~~x\triangleright a=D(a),
\end{eqnarray*}
where $a_1\in A$, $k\in K$, and $p$, $q$, $\eta: A\to K $, $D$, $S$, $T: A \to A$ are six linear maps. Take them into $(G0)$-$(G10)$. Then by Proposition 4.4 in \cite{H1}, Proposition 5.4 in \cite{AM2} and a straightforward computation, we obtain that $\Omega (A, V) = (l_A, r_A, l_V, r_V, f, \ast, \triangleleft, \triangleright, h, \{\cdot,\cdot\})$ is a Gel'fand-Dorfman extending structure if and only if $(GF0)$-$(GF6)$ hold. Then the proof is finished.
\end{proof}

We denote the unified product associated to the Gel'fand-Dorfman extending structure corresponding to a Gel'fand-Dorfman flag datum $(p, q, S, T, a_1, k, \eta , D)$ by $GD(A, x\mid p, q, S, T, a_1, k, \eta , D)$.
\begin{theorem}\label{t3}
Let $(A, \circ, [\cdot,\cdot])$ be a Gel'fand-Dorfman bialgebra of codimension $1$ in the vector space $E$. Then there exists a bijection
\begin{eqnarray*}
&&Extd(E,A)\cong \mathcal{GH}^2(K,A)\cong \mathcal{F}(A)/\equiv,
\end{eqnarray*}
where $\equiv$ is the equivalence relation on the set $\mathcal{F}(A)$ of all Gel'fand-Dorfman flag datums of $(A, \circ, [\cdot, \cdot])$ defined as follows: $(p, q, S, T, a_1, k, \eta, D)\equiv (p^{'}, q^{'}, S^{'}, T^{'}, a_{1}^{'}, k^{'}, \eta^{'}, D^{'})$ if and only if $p=p^{'}$, $q=q^{'}$, $\eta=\eta^{'}$ and there exists a pair $(a_0, \beta) \in A\times K^\ast $ satisfying the following conditions:
\begin{eqnarray*}
(FD1)&&S(a)=a_0\circ a+\beta S^{'}(a)-q(a)a_0,\\
(FD2)&&T(a)=a\circ a_0+\beta T^{'}(a)-p(a)a_0,\\
(FD3)&&a_1=a_0\circ a_0+\beta S^{'}(a_0)+\beta T^{'}(a_0)+\beta^2a_{1}^{'}-ka_0,\\
(FD4)&&k=\beta k^{'}+p(a_0)+q(a_0),\\
(FD5)&&D(a)=\beta D^{'}(a)+[a_0,a]- \eta(a)a_0,
\end{eqnarray*}
for all $a\in A$. The bijection between $\mathcal{F}(A)/\equiv$ and \text{Extd} $(E,A)$ is given by $
\overline{(p,q,S,T,a_1, k,\eta,D)} \to GD(A, x\mid p, q, S, T, a_1, k, \eta , D)$, where $\overline{(p, q, S, T, a_1, k, \eta, D)}$ is the equivalence class of $(p, q, S, T, a_1, k, \eta, D)$ by $\equiv$.
\end{theorem}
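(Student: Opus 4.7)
The plan is to obtain Theorem \ref{t3} by chaining three bijections already essentially set up in the preceding results, and then to verify by direct substitution that the equivalence relation on extending structures translates, in the codimension-one case, into the explicit relation on flag datums given by $(FD1)$--$(FD5)$.

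First, I would apply Theorem \ref{t2} with $V$ equal to any fixed one-dimensional complement $Kx$ of $A$ in $E$. This immediately yields the bijection $\text{Extd}(E,A) \cong \mathcal{GH}^2(K,A)$. Next, by Proposition \ref{pro1}, the set $\mathcal{GD}(A,V)$ of Gel'fand-Dorfman extending structures of $(A,\circ,[\cdot,\cdot])$ by $V$ is in explicit bijection with $\mathcal{F}(A)$, via the parametrization $l_A(a)x = p(a)x$, $r_A(a)x = q(a)x$, $l_V(x)a = S(a)$, $r_V(x)a = T(a)$, $f(x,x) = a_1$, $x\ast x = kx$, $x\triangleleft a = \eta(a)x$, $x\triangleright a = D(a)$, and $h = 0$, $\{\cdot,\cdot\} = 0$. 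So the task reduces to transferring the equivalence relation of Lemma \ref{lemm1} from $\mathcal{GD}(A,V)$ to $\mathcal{F}(A)$.

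For this transfer, I would observe that when $\dim V = 1$ with basis $\{x\}$, an automorphism $\mu \in \text{Aut}_K(V)$ is just multiplication by some $\beta \in K^\ast$, and a linear map $\lambda: V \to A$ is determined by the single element $a_0 := \lambda(x) \in A$. I would then substitute $\mu(x) = \beta x$, $\mu^{-1}(cx) = \beta^{-1}cx$, and $\lambda(x) = a_0$ into each of the conditions $(D1)$--$(D10)$ of Lemma \ref{lemm1}, using the parametrization above for both $\Omega(A,V)$ and $\Omega'(A,V)$. A direct computation shows: $(D1)$ and $(D2)$ force $q = q'$ and $p = p'$ respectively; $(D7)$ forces $\eta = \eta'$; $(D3)$ yields $(FD1)$, $(D4)$ yields $(FD2)$, $(D8)$ yields $(FD5)$, $(D5)$ yields $(FD4)$, and $(D6)$ together with $(FD4)$ yields $(FD3)$. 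Finally, $(D9)$ and $(D10)$ are automatically satisfied, since both $h$, $\{\cdot,\cdot\}$ and $h'$, $\{\cdot,\cdot\}'$ vanish and the remaining terms on the right-hand sides are antisymmetric in the single argument $(x,x)$.

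The resulting bijection $\mathcal{GH}^2(K,A) \cong \mathcal{F}(A)/\equiv$ sends the equivalence class of $\Omega(A,V)$ to the equivalence class of the associated flag datum, and composing with the bijection from Theorem \ref{t2} gives precisely the map $\overline{(p,q,S,T,a_1,k,\eta,D)} \mapsto GD(A,x\mid p,q,S,T,a_1,k,\eta,D)$ claimed in the theorem. I expect the only real bookkeeping obstacle to be keeping track of the signs and the $\beta$-factors carefully when reducing $(D5)$ and $(D6)$, since $(FD3)$ uses the already-substituted expression $k = \beta k' + p(a_0) + q(a_0)$ from $(FD4)$; all other reductions are one-line substitutions. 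No new algebraic identities are required beyond those already encoded in Proposition \ref{pro1} and Lemma \ref{lemm1}.
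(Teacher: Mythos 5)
Your proposal is correct and follows exactly the paper's route: the paper also chains Theorem \ref{t2}, Proposition \ref{pro1} and Lemma \ref{lemm1}, setting $\lambda(x)=a_0$ and $\mu(x)=\beta x$ and reading off $(FD1)$--$(FD5)$ from $(D1)$--$(D10)$. Your explicit bookkeeping of which $(D)$ condition yields which $(FD)$ condition (and why $(D9)$, $(D10)$ are vacuous) is just the detail the paper leaves as "follows directly," and it checks out.
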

\begin{proof}
 Let $(p, q, S, T, a_1, k, \eta, D)$, $(p^{'}, q^{'}, S^{'}, T^{'}, a_{1}^{'}, k^{'}, \eta^{'}, D^{'})\in \mathcal{F}(A)$ and $\Omega(A, V)$, $\Omega'(A, V)$ be the corresponding Gel'fand-Dorfman extending structures respectively. Since $\text{dim}(V)=1$, assume that $x$ is a basis of $V$. Then we can set $\lambda$, $\mu$ in Lemma \ref{lemm1} as follows:
 \begin{eqnarray*}
 \lambda(x)=a_0,~~~\mu(x)=\beta x,
 \end{eqnarray*}
 where $a_0\in A$ and $\beta\in K^\ast$. Then this theorem follows directly from Lemma \ref{lemm1}, Theorem \ref{t2} and Proposition \ref{pro1}.
\end{proof}

If $(p, q, S, T, a_1, k, \eta, D)\in \mathcal{F}(A)$, where $p$, $q$, $T$ and $\eta$ are trivial, then we denote such flag datum of $(A, \circ, [\cdot,\cdot])$ by $(S, a_1, k, D)$. We denote the set of all such flag datums of $(A, \circ, [\cdot,\cdot])$ by $\mathcal{SF}_1(A)$. Note that in this case, $S$ and $D$ are derivations of $(A, \circ)$. By Theorem \ref{t3}, $(S, a_1, k, D)\equiv(S^{'}, a_1^{'}, k^{'},D^{'})$ if and only if there exists a pair $(a_0, \beta)\in A\times K^{\ast}$ such that the following conditions hold:
\begin{eqnarray*}
&&S(a)=a_0\circ a+\beta S^{'}(a),\\
&&a\circ a_0=0,\\
&&a_1=a_0\circ a_0+\beta S^{'}(a_0)+\beta^2a_1^{'}-ka_0,\\
&&k=\beta k^{'},\\
&&D(a)=\beta D^{'}(a)+[a_0,a],
\end{eqnarray*}
for all $a\in A$. In particular, in this case, when $a_1=0$, we denote such flag datum of $(A, \circ, [\cdot,\cdot])$ by $(S, k, D)$. We denote the set of all such flag datums of $(A, \circ, [\cdot,\cdot])$ by $\mathcal{SF}_2(A)$. Define $(S, k, D)\approx(S^{'}, k^{'}, D^{'})$ if  there exists a pair $\beta\in K^{\ast}$ such that the following conditions hold:
\begin{eqnarray*}
&&S(a)=\beta S^{'}(a),\\
&&k=\beta k^{'},\\
&&D(a)=\beta D^{'}(a),
\end{eqnarray*}
for all $a\in A$.

If $(p, q, S, T, a_1, k, \eta, D)\in \mathcal{F}(A)$, where $p$, $T$ are trivial and $k=0$, then we denote such flag datum of $(A, \circ, [\cdot,\cdot])$ by $(q, S, a_1, \eta, D)$. Denote the set of all such flag datums of $(A, \circ, [\cdot,\cdot])$ where $q\neq 0$ by $\mathcal{SF}_3(A)$. By Theorem \ref{t3}, $(q, S, a_1, \eta, D)\equiv(q^{'}, S^{'}, a_1^{'}, \eta^{'}, D^{'})$ if and only if $q=q^{'}$, $\eta=\eta^{'}$ and there exists a pair $(a_0, \beta)\in A\times K^{\ast}$ satisfying the following conditions:
\begin{eqnarray*}
&&S(a)=a_0\circ a+\beta S^{'}(a)-q(a)a_0,\\
&&a\circ a_0=0,\\
&&a_1=a_0\circ a_0+\beta S^{'}(a_0)+\beta^2a_1',\\
&&q(a_0)=0,\\
&&D(a)=\beta D^{'}(a)+[a_0,a]-\eta(a)a_0,
\end{eqnarray*}
for all $a\in A$. Moreover, we define that $(q, S, a_1, \eta, D)\approx (q^{'}, S^{'}, a_1^{'}, \eta^{'}, D^{'})$ if $q=q^{'}$, $\eta=\eta^{'}$ and there exists $\beta\in A$ such that the following conditions hold:
\begin{eqnarray*}
&&S(a)=\beta S^{'}(a),\\
&&a_1=\beta^2a_1^{'},\\
&&D(a)=\beta D^{'}(a),
\end{eqnarray*}
for all $a\in A$.

If $(p, q, S, T, a_1, k, \eta, D)\in\mathcal{F}(A)$, where $p$, $q$, $\eta$ and $D$ are trivial, then we denote such flag datum of $(A, \circ, [\cdot, \cdot])$ by $(S, T, a_1, k)$. Denote the set of all such flag datums of $(A, \circ, [\cdot, \cdot])$ by $\mathcal{SF}_4(A)$. By Theorem \ref{t3}, $(S, T, a_1, k)\equiv(S^{'}, T^{'}, a_1^{'}, k^{'})$ if and only if there exists a pair $(a_0, \beta)\in A\times K^{\ast}$ satisfying the following conditions:
\begin{eqnarray*}
&&S(a)=\beta S^{'}(a)+a_0\circ a,\\
&&T(a)=\beta T^{'}(a)+a\circ a_0,\\
&&a_1=a_0\circ a_0+\beta S^{'}(a_0)+\beta T^{'}a_0+\beta^2a_1^{'}-ka_0,\\
&&k=\beta k^{'},\\
&&[a_0, a]=0,
\end{eqnarray*}
for all $a\in A$. In particular, in this case, when $a_1=0$, we also denote such flag datum of $(A, \circ, [\cdot, \cdot])$ by $(S, T, k)$. Denote the set of all such flag datums of $(A, \circ, [\cdot, \cdot])$ by $\mathcal{SF}_5(A)$. We say that $(S, T, k)\approx(S^{'}, T^{'}, k^{'})$ if there exists $ \beta\in K^{\ast}$ such that the following conditions are satisfied:
\begin{eqnarray*}
&&S(a)=\beta S^{'}(a),\\
&&T(a)=\beta T^{'}(a),\\
&&a_1=\beta^2a_1^{'},\\
&&k=\beta k^{'},
\end{eqnarray*}
for all $a\in A$.

\begin{corollary}
(1) Let $(A, \circ, [\cdot,\cdot])$ be a Gel'fand-Dorfman bialgebra, its sub-adjacent Lie algebra $(g(A), [\cdot,\cdot])$ is perfect, i.e. $[g(A),g(A)]=g(A)$, and all quasicentroids of $(A, \circ)$ are inner. Then
\begin{eqnarray*}
&&Extd(E,A)\cong \mathcal{GH}^2(K,A)\cong(\mathcal{SF}_1(A)/\equiv)\cup(\mathcal{SF}_3(A)/\equiv).
\end{eqnarray*}
In particular, if $\{b\mid a\circ b=0~~ \text{for all $a\in A$}\}=\{0\}$, then
\begin{eqnarray*}
&&Extd(E,A)\cong \mathcal{GH}^2(K,A)\cong(\mathcal{SF}_2(A)/\approx)\cup(\mathcal{SF}_3(A)/\approx).
\end{eqnarray*}
(2) Let $(A, \circ, [\cdot, \cdot])$ be a Gel'fand-Dorfman bialgebra, $(A, [\cdot,\cdot])$ is perfect, i.e. $[A,A]=A$ and all derivations of $(A, [\cdot,\cdot])$ are inner. Then
\begin{eqnarray*}
Extd(E,A)\cong\mathcal{GH}^2(K,A)\cong\mathcal{SF}_4(A)/\equiv.
\end{eqnarray*}
Moreover, if the center of $(A, [\cdot,\cdot])$ is also trivial, then
\begin{eqnarray*}
Extd(E,A)\cong\mathcal{GH}^2(K,A)\cong\mathcal{SF}_5(A)/\approx.
\end{eqnarray*}
\end{corollary}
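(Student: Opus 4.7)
The plan is to apply Theorem \ref{t3}, which identifies $\text{Extd}(E,A)$ with $\mathcal{F}(A)/\equiv$, and then use the structural hypotheses to show that every Gel'fand-Dorfman flag datum $(p,q,S,T,a_1,k,\eta,D)$ is equivalent to one lying in the indicated subset. In each part I would first exploit relations among (FN1)--(FN10), (GF1)--(GF6) and (TD1)--(TD2) together with perfectness to force several components of the flag datum to vanish identically, and then use the inner-quasicentroid (part~(1)) or inner-derivation (part~(2)) hypothesis to kill the remaining map via an explicit choice of $(a_0,\beta)$ in the equivalence formulae (FD1)--(FD5).

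For part~(1), the multiplicativity relation (FN1) yields $q(a\circ b - b\circ a) = q(a)q(b)-q(b)q(a) = 0$; since $g(A)$ is perfect, the sub-adjacent commutators span $A$ and hence $q = 0$. Then (FN8) gives $p(a\circ b) = p(a)q(b) = 0$, so $p(a\circ b - b\circ a) = 0$, and perfectness forces $p = 0$. With $p = q = 0$, (GF2) collapses to $\eta(b\circ a) = 0$, and the same perfectness argument gives $\eta = 0$. Now (FN7) and (FN3) reduce exactly to the two defining identities of a quasicentroid for $T$, so by hypothesis $T$ is inner: $T(a) = a\circ b_0$ for some $b_0\in A$. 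Choosing $(a_0,\beta) = (b_0,1)$ in Lemma~\ref{lemm1} kills $T'$ via (FD2), and the remaining identities (FD1), (FD3), (FD5) produce an equivalent flag datum in $\mathcal{SF}_1(A)$; the set $\mathcal{SF}_3(A)$ is vacuous here since $q = 0$. For the ``in particular'' refinement, (FN4) in $\mathcal{SF}_1(A)$ collapses (via $T = p = q = 0$) to $a\circ a_1 = 0$ for all $a$, so triviality of the right annihilator forces $a_1 = 0$, placing the datum in $\mathcal{SF}_2(A)$; any further equivalence within $\mathcal{SF}_2(A)$ must preserve $T' = 0$, so (FD2) forces $a\circ a_0 = 0$, hence $a_0 = 0$, and $\equiv$ collapses to the scaling relation $\approx$.

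For part~(2), perfectness of $(A,[\cdot,\cdot])$ combined with (GF4) and (TD1) immediately yields $q = 0$ and $\eta = 0$. Then (GF2) reads $p([b,a]) = 0$, so $p = 0$ by perfectness. Condition (TD2) with $\eta = 0$ becomes $D([a,b]) = [D(a),b] + [a,D(b)]$, asserting that $D$ is a derivation of $(A,[\cdot,\cdot])$; by hypothesis $D$ is inner, $D(a) = [d_0,a]$ for some $d_0$. Taking $(a_0,\beta) = (d_0,1)$ in (FD5) kills $D'$, yielding an equivalent flag datum in $\mathcal{SF}_4(A)$. For the moreover part, (GF5) in $\mathcal{SF}_4(A)$ reduces to $[a,a_1] = 0$ for all $a$, so trivial center forces $a_1 = 0$ and the datum lies in $\mathcal{SF}_5(A)$. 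Any subsequent equivalence within $\mathcal{SF}_5(A)$ must preserve $D' = 0$, so (FD5) forces $[a_0,a] = 0$ for all $a$, hence $a_0 = 0$; with $a_0 = 0$, the identities (FD1)--(FD5) reduce to pure scaling by $\beta$, which is precisely the relation $\approx$.

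The main technical obstacle I anticipate is the bookkeeping when transforming data by $(a_0,\beta)$: one must verify that the single choice of $a_0$ which kills $T$ (respectively $D$) simultaneously produces consistent new data via (FD1), (FD3), (FD4) and (FD5), with the required component equal to zero. Since $\equiv$ is by construction an equivalence on $\mathcal{F}(A)$, the defining conditions (GF0)--(GF6) are automatically preserved under the transformation, so the resulting flag datum does lie in $\mathcal{F}(A)$; the core task is simply to confirm that its $T'$- or $D'$-component vanishes and that the invariants $p'=0$, $q'=0$, $\eta'=0$ persist, which follows directly from the formulae in Lemma~\ref{lemm1}.
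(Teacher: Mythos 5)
Your part (2) is essentially the paper's own argument: $\eta=0$ from (TD1) together with $[A,A]=A$, $q=0$ from (GF4) and $p=0$ from (GF2), $D$ an inner derivation of $(A,[\cdot,\cdot])$ removed through (FD5), and, for the refinement, $a_1=0$ from (GF5) and $a_0=0$ from (FD5) when the center is trivial; this matches the paper step for step. Part (1) also follows the paper's skeleton (reduce the flag datum, use the inner-quasicentroid hypothesis to remove $T$ via (FD2) after observing that (FN3) and (FN7) with $p=0$ say exactly that $T$ is a quasicentroid, then use (FN4) and (FD2) for the annihilator refinement), but you replace the paper's appeal to Corollary 4.6 of \cite{H1} by a direct derivation, and this is where you genuinely diverge: from (FN1) you get $q(a\circ b-b\circ a)=0$, so perfectness of $g(A)$ forces $q\equiv 0$, then $p\equiv 0$ and $\eta\equiv 0$, and you conclude that $\mathcal{SF}_3(A)=\emptyset$. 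The paper's proof instead keeps two cases, the second one having $q$ nontrivial, $p,T$ trivial and $k=0$, which is exactly why $\mathcal{SF}_3(A)/\equiv$ appears in the statement; note that the paper only extracts from perfectness the weaker fact $A\circ A=A$, and it is only under that weaker assumption that the $q\neq 0$ case can actually occur (as in Example \ref{exx1}, where $A\circ A=A$ but $[g(A),g(A)]\neq g(A)$).

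So there is a real tension you should address rather than pass over. Under the hypothesis as literally stated ($[g(A),g(A)]=g(A)$), your deduction $q=0$ is forced by (FN1), your proof is valid, and the displayed bijections hold with the $\mathcal{SF}_3$-terms vacuous; in that reading your argument is sharper than the paper's, but you should say explicitly that $\mathcal{SF}_3(A)=\emptyset$ in this regime, since otherwise the reader cannot reconcile your claim with the statement being proved. If, on the other hand, the intended hypothesis is only $A\circ A=A$ (the condition actually used in the paper's proof and in \cite{H1}), then your derivation of $q=0$, and with it $p=0$, $\eta=0$ and the reduction of every flag datum to $\mathcal{SF}_1(A)$, collapses, and the case $q\neq 0$ must be analyzed separately to produce the $\mathcal{SF}_3(A)$ component; your proposal contains no argument for that case. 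Either prove the result under the stated stronger hypothesis and record the vacuity of $\mathcal{SF}_3(A)$, or redo the case analysis under $A\circ A=A$. The remaining bookkeeping point you raise is handled correctly: since $\equiv$ is defined through the isomorphisms $\varphi_{\lambda,\mu}$ of Lemma \ref{lemm1} and Theorem \ref{t3}, the transformed datum automatically lies in $\mathcal{F}(A)$ and the invariants $p,q,\eta$ persist, so killing $T$ (resp.\ $D$) by the choice $(a_0,\beta)=(b_0,1)$ (resp.\ $(d_0,1)$) indeed lands in $\mathcal{SF}_1(A)$ (resp.\ $\mathcal{SF}_4(A)$).
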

\begin{proof}
(1) Since $[g(A), g(A)]=g(A)$, following the proof in Corollary 4.6 in \cite{H1}, we can obtain two cases as follows:
\begin{itemize}
\item $p$, $q$ and $T$ are trivial,\\
 \item $p$, $T$ are trivial, $k=0$ and $q$ is non-trivial.
 \end{itemize}
In the first case, by $(GF2)$, we get $\eta(b\circ a)=0$ for all $a$, $b\in A$. Since $[g(A), g(A)]=g(A)$, $A\circ A=A$. Therefore, $\eta=0$. In this case, if $\{b\mid a\circ b=0 ~~~\text{for all $a\in A$}\}=\{0\}$, by $(FN4)$, we get $a_1=0$.  Moreover, by $(FD2)$, we obtain $a\circ a_0=0$ for all $a\in A$. When $\{b\mid a\circ b=0 ~~~\text{for all $a\in A$}\}=\{0\}$,  $a_0=0$. Then these results follow directly from Theorem \ref{t3}.

(2) Since $[A,A]=A$, we get $\eta=0$ by $(TD1)$. Therefore, $D$ is a derivation of $(A, [\cdot, \cdot])$. Since all derivations of $(A, [\cdot,\cdot])$ are inner, we can suppose that $D=0$ by Theorem \ref{t3}. Then by $(GF2)$ and $(GF4)$, we obtain that $p=0$ and $q=0$. In particular, when the center of $(A, [\cdot, \cdot])$ is trivial, we get $[a,a_1]=0$ for all $a\in A$ by $(GF5)$, which means that $a_1=0$. Moreover, by $(FD5)$, $[a_0,a]=0$ for all $a\in A$. Therefore, in this special case, we get $a_0=0$. Then the conclusion follows directly by Theorem \ref{t3}.
\end{proof}

Finally, we present an example to compute $\mathcal{GH}^2(K, A)$.
\begin{example}\label{exx1}
Let $K=\mathbb{C}$ and $(A, \circ, [\cdot,\cdot])$ be a 2-dimensional Gel'fand-Dorfman bialgebra with a basis $\{L, W\}$ and the products given by:
\begin{eqnarray}
&&\label{eqq1}L\circ L=L,~~L\circ W=0,~~~W\circ L=W,~~~W\circ W=0,~~~[L,W]=-bW,
\end{eqnarray}
where $b\in K$. This is the Gel'fand-Dorfman bialgebra corresponding to the Lie conformal algebra $W(1, b)$ studied in \cite{LHZZ}. Note that $W(1, 0)$ is just the Heisenberg-Virasoro conformal algebra introduced in \cite{SY}.

By a rather long but straightforward computation, the following flag datums $(p, q, S, T, a_1, k, \eta, D)$ satisfying $(GF0)$-$(GF6)$ are as follows.

When $b\neq 0$, there are five cases.

\indent{\bf Case A1}:
\begin{eqnarray*}
&&p(L)=1, p(W)=0, q(L)=1, q(W)=0, S=\left(\begin{array}{cc}0 & 0\\0 & 0\end{array}\right),T=\left(\begin{array}{cc}0 & 0\\b_2 & b_1\end{array}\right),\\
&&a_1=b_1^2L-b_1b_2W , k=0,\eta=0, D=\left(\begin{array}{cc}0 & 0\\-bb_2 & -bb_1\end{array}\right),\\
&&for\ all\ b_1, b_2\in K.
\end{eqnarray*}
Thus in this case any 3-dimensional Gel'fand-Dorfman bialgebra that contains $(A, \circ, [\cdot,\cdot])$ as a subalgebra is isomorphic to the following Gel'fand-Dorfman bialgebra denoted by ${A1}_{b_1, b_2}$ with the basis $\{L, W, x\}$ and the products given by (\ref{eqq1})  and
\begin{eqnarray*}
&&x\circ L=x, L\circ x=b_2W+x, x\circ W=0, W\circ x=b_1W, x\circ x= b_1^2L-b_1b_2W,\\
&&[x,L]=-bb_2W, [x,W]=-bb_1W.
\end{eqnarray*}
Two such Gel'fand-Dorfman bialgebras ${A1}_{b_1, b_2}$  and ${A1}_{b_1^{'}, b_2^{'}}$  are equivalent if and only if there exist $c_2\in K$ and $\beta\in K^{\ast}$ such that $b_1=\beta b_1^{'}$, $b_2=\beta b_2^{'}-c_2$. Let $c_2=-b_2$. Then it is easy to see that ${A1}_{b_1, b_2}$ is  equivalent to either ${A1}_{0, 0}$ or ${A1}_{1, 0}$. Therefore, in this case, there are only two equivalence classes, i.e. ${A1}_{0, 0}$ and ${A1}_{1, 0}$.

\indent{\bf Case A2}:
\begin{eqnarray*}
&&p(L)=\frac{1}{2}, p(W)=0,q(L)=1,q(W)=0, S=\left(\begin{array}{cc}0 & 0\\0 & 0\end{array}\right), T=\left(\begin{array}{cc}0 & 0\\b_1 & 0\end{array}\right),\\
&&a_1=b_2W , k=0, \eta(L)=\frac{b}{2},\eta(W)=0, D=\left(\begin{array}{cc}0 & 0\\-bb_1 & 0\end{array}\right), \\
&&for\ all\ b_1, b_2\in K.
\end{eqnarray*}
Thus in this case any 3-dimensional Gel'fand-Dorfman bialgebra that contains $(A, \circ, [\cdot,\cdot])$ as a subalgebra is isomorphic to the following Gel'fand-Dorfman bialgebra denoted by ${A2}_{b_1, b_2}$ with the basis $\{L, W, x\}$ and the products given by (\ref{eqq1})  and
\begin{eqnarray*}
&&x\circ L=x, L\circ x=b_1W+\frac{1}{2}x, x\circ W=0, W\circ x=0, x\circ x= b_2W,\\
&&[x,L]=-bb_1W+\frac{b}{2}x, [x,W]=0.
\end{eqnarray*}
Two such Gel'fand-Dorfman bialgebras ${A2}_{b_1, b_2}$ and ${A2}_{b_1^{'}, b_2^{'}}$ are equivalent if and only if there exist $c_2 \in K$ and $\beta \in K^*$ such that $b_1=\beta b_1^{'}-\frac{c_2}{2}$, and $b_2=\beta^2b_2^{'}$. Let $c_2=-2b_1$. Then it is easy to see that ${A2}_{b_1, b_2}$ is  equivalent to either ${A2}_{0, 0}$ or ${A2}_{0, 1}$. Therefore, in this case, there are only two equivalence classes, i.e. ${A2}_{0, 0}$ and ${A2}_{0, 1}$.

\indent{\bf Case A3}:
\begin{eqnarray*}
&&p(L)=-1, p(W)=0, q(L)=1, q(W)=0, S=\left(\begin{array}{cc}0 & 0\\0 & 0\end{array}\right), T=\left(\begin{array}{cc}2b_1 & 0\\b_2 & b_3\end{array}\right), \\
&&a_1=b_3^2L+b_2b_3W, k=0, \eta(L)=b_4, \eta(W)=0, D=\left(\begin{array}{cc}-b_3b_4 & 0\\bb_2-b_2b_4 & -bb_3\end{array}\right),\\
&&for\ all\ b_1, b_2, b_3, b_4\in K..
\end{eqnarray*}
Thus in this case any 3-dimensional Gel'fand-Dorfman bialgebra that contains $(A, \circ, [\cdot,\cdot])$ as a subalgebra is isomorphic to the following Gel'fand-Dorfman bialgebra denoted by ${A3}_{b_1, b_2, b_3, b_4}$ with the basis $\{L, W, x\}$ and the products given by (\ref{eqq1}) and
\begin{eqnarray*}
&&x\circ L=x, L\circ x=2b_1L+b_2W-x, x\circ W=0, W\circ x=b_3W, \\
&&x\circ x= b_3^2L+b_2b_3W, [x,L]=-b_3b_4L+(bb_2-b_2b_4)W+b_4x, [x,W]=-bb_3W.
\end{eqnarray*}
Two such Gel'fand-Dorfman bialgebras ${A3}_{b_1, b_2, b_3, b_4}$ and ${A3}_{b_1^{'}, b_2^{'}, b_3^{'}, b_4^{'}}$ are equivalent if and only if there exist $c_1$, $c_2\in K$ and $\beta \in K^{\ast}$ such that $b_1=\beta b_1^{'} +c_1$, $b_2=\beta b_2^{'}+c_2$, $b_3=\beta b_3^{'}+c_1$ and $b_4=b_4^{'}$. Let $c_2=b_2$. Then ${A3}_{b_1, b_2, b_3, b_4}$ is equivalent to ${A3}_{b_1, 0, b_3, b_4}$.
Moreover, ${A3}_{b_1, 0, b_3, b_4}$ is equivalent to ${A3}_{b_1, 0, b_3, b_4}$ if and only if there exist $c_1\in K$ and $\beta \in K^{\ast}$ such that $b_1=\beta b_1^{'} +c_1$, $b_3=\beta b_3^{'}+c_1$ and $b_4=b_4^{'}$.

\indent{\bf Case A4}:
\begin{eqnarray*}
&&p=0, q=0, S=\left(\begin{array}{cc}b_1 & 0\\b_2 & 0\end{array}\right), T=\left(\begin{array}{cc}b_1 & 0\\0 & b_1\end{array}\right), \\
&&a_1= b_1^2L+b_1b_2W, k=0, \eta=0, D=\left(\begin{array}{cc}0 & 0\\0 & -bb_1\end{array}\right), \\
&&for\ all\ b_1, b_2\in K.
\end{eqnarray*}
Thus in this case any 3-dimensional Gel'fand-Dorfman bialgebra that contains $(A, \circ, [\cdot, \cdot])$ as a subalgebra is isomorphic to the following Gelfand-Dorfman bialgebra denoted by ${A4}_{b_1, b_2}$ with the basis $\{L, W, x\}$ and the products given by (\ref{eqq1}) and
\begin{eqnarray*}
&&x\circ L=b_1L+b_2W, L\circ x=b_1L, x\circ W=0, W\circ x=b_1W, x\circ x=b_1^2L+b_1b_2W,\\
&&[x,L]=0, [x,W]=-bb_1W.
\end{eqnarray*}
Two such Gel'fand-Dorfman bialgebras ${A4}_{b_1, b_2}$ and ${A4}_{b_1^{'}, b_2^{'}}$ are equivalent if and only if there exist $c_1$, $c_2 \in K$ and $\beta \in K^{\ast}$ such that $b_1=\beta b_1^{'}+c_1$ and $b_2=\beta b_2^{'}+c_2$. Let $c_1=b_1$ and $c_2=b_2$. Then we get that ${A4}_{b_1, b_2}\equiv {A4}_{0, 0}$.

\indent{\bf Case A5}:
\begin{eqnarray*}
&&p=0, q=0, S=\left(\begin{array}{cc}0 & 0\\0 & 0\end{array}\right), T=\left(\begin{array}{cc}0 & 0\\0 & 0\end{array}\right), \\
&&a_1=-kb_1W , \eta=0, D=\left(\begin{array}{cc}0 & 0\\b_1 & 0\end{array}\right),\\
&&for\ all\ b_1, k\in K\ and \ (b_1, k)\neq (0, 0).
\end{eqnarray*}
Thus in this case any 3-dimensional Gel'fand-Dorfman bialgebra that contains $(A, \circ, [\cdot,\cdot])$ as a subalgebra is isomorphic to the following Gel'fand-Dorfman bialgebra denoted by ${A5}_{b_1, k}$ with the basis $\{L, W, x\}$ and the products given by (\ref{eqq1}) and
\begin{eqnarray*}
&&x\circ L=0, L\circ x=0, x\circ W=0, W\circ x=0, x\circ x=-kb_1W+kx,\\
&&[x,L]=b_1W, [x,W]=0.
\end{eqnarray*}
Two such Gel'fand-Dorfman bialgebras ${A5}_{b_1, k}$ and ${A5}_{b_1^{'}, k^{'}}$ are equivalent if and only if there exist $\beta \in K^{\ast}$ such that $b_1=\beta b_1^{'}$ and $k=\beta k^{'}$.

Therefore, by the discussion above and Theorem \ref{t3}, when $b\neq 0$, $\mathcal{GH}^2(\mathbb{C}, A)$ can be described by the disjoint union of ${A1}_{0, 0}$, ${A1}_{1, 0}$, ${A2}_{0, 0}$, ${A2}_{0, 1}$, the equivalence classes of all ${A3}_{b_1, 0, b_3, b_4}$, ${A4}_{0, 0}$ and the equivalence classes of all ${A5}_{b_1, k}$ where $(b_1, k)\neq (0, 0)$.

When $b=0$, there are five cases.

\indent{\bf Case B1}:
\begin{eqnarray*}
&&p(L)=-1, p(W)=0, q(L)=1, q(W)=0, S=\left(\begin{array}{cc}0 & 0\\0 & 0\end{array}\right), T=\left(\begin{array}{cc}2b_1 & 0\\b_2 & b_1\end{array}\right), \\
&&a_1=b_1^2L+b_1b_2W, k=0, \eta(L)=b_3,\eta(W)=b_4, D=\left(\begin{array}{cc}-b_1b_3 & -b_1b_4\\-b_2b_3 & -b_2b_4\end{array}\right), \\
&&for\ all\ b_1, b_2, b_3, b_4\in K.
\end{eqnarray*}
Thus in this case any 3-dimensional Gel'fand-Dorfman bialgebra that contains $(A, \circ, [\cdot,\cdot])$ as a subalgebra is isomorphic to the following Gel'fand-Dorfman bialgebra denoted by ${B1}_{b_1, b_2, b_3, b_4}$ with the basis $\{L, W, x\}$ and the products given by (\ref{eqq1}) and
\begin{eqnarray*}
&&x\circ L=x, L\circ x=2b_1L+b_2W-x, x\circ W=0, W\circ x=b_1W,\\
&& x\circ x= b_1^2L+b_1b_2W, [x,L]=-b_1b_3L-b_2b_3W+b_3x, \\
&&[x,W]=-b_1b_4L-b_2b_4W+b_4x.
\end{eqnarray*}
Two such Gel'fand-Dorfman bialgebras ${B1}_{b_1, b_2, b_3, b_4}$  and ${B1}_{b_1^{'}, b_2^{'}, b_3^{'}, b_4^{'}}$  are equivalent if and only if there exist $c_1$, $c_2\in K$ and $\beta\in K^{\ast}$ such that $b_1=\beta b_1^{'}+c_1$, $b_2=\beta b_2^{'}+c_2$, $b_3=b_3^{'}$ and $b_4=b_4^{'}$.
Let $c_1=b_1$ and $c_2=b_2$. Then ${B1}_{b_1, b_2, b_3, b_4}$ is equivalent to ${B1}_{0, 0, b_3, b_4}$. Moreover, ${B1}_{0, 0, b_3, b_4}\equiv {B1}_{0, 0, b_3^{'}, b_4^{'}}$ if and only if $b_3=b_3^{'}$ and $b_4=b_4^{'}$.

\indent{\bf Case B2}:
\begin{eqnarray*}
&&p=0, q=0, S=\left(\begin{array}{cc}b_1 & 0\\b_2 & 0\end{array}\right), T=\left(\begin{array}{cc}b_1 & 0\\0 & b_1\end{array}\right),\\
&&a_1=b_1^2L+b_1b_2W , k=0, \eta =0, D=\left(\begin{array}{cc}0 & 0\\b_3 & 0\end{array}\right)\\
&&for\ all\ b_1, b_2, b_3\in K.
\end{eqnarray*}
Thus in this case any 3-dimensional Gel'fand-Dorfman bialgebra that contains $(A, \circ, [\cdot,\cdot])$ as a subalgebra is isomorphic to the following Gel'fand-Dorfman bialgebra denoted by ${B2}_{b_1, b_2, b_3}$ with the basis $\{L, W, x\}$ and the products given by (\ref{eqq1}) and
\begin{eqnarray*}
&&x\circ L=b_1L+b_2W, L\circ x=b_1L, x\circ W=0, W\circ x=b_1W, x\circ x= b_1^2L+b_1b_2W,\\
&&[x,L]= b_3W, [x,W]=0.
\end{eqnarray*}
Two such Gel'fand-Dorfman bialgebras ${B2}_{b_1, b_2, b_3}$ and ${B2}_{b_1^{'}, b_2^{'}, b_3^{'}}$ are equivalent if and only if there exist $c_1$, $c_2 \in K$ and $\beta \in K^{\ast}$ such that $b_1=\beta b_1^{'} +c_1$, $b_2=\beta b_2^{'}+c_2$, and $b_3=\beta b_3^{'}$. Let $c_1=b_1$ and $c_2=b_2$. Then it is easy to see that ${B2}_{b_1, b_2, b_3}$ is equivalent to either ${B2}_{0, 0, 0}$ or ${B2}_{0, 0, 1}$.

\indent{\bf Case B3}:
\begin{eqnarray*}
&&p(L)=1,p(W)=0,q(L)=1,q(W)=0,S=\left(\begin{array}{cc}0 & 0\\0 & 0\end{array}\right),T=\left(\begin{array}{cc}0 & 0\\b_1 & b_2\end{array}\right), \\
&&a_1=(b_2^2-kb_2)L+(kb_1-b_1b_2)W,\eta=0, D=\left(\begin{array}{cc}0 & 0\\0 & 0\end{array}\right),\\
&&for\ all\ b_1,b_2,k\in K.
\end{eqnarray*}
Thus in this case any 3-dimensional Gel'fand-Dorfman bialgebra that contains $(A,\circ,[\cdot,\cdot])$ as a subalgebra is isomorphic to the following Gel'fand-Dorfman algebra denoted by $B3_{b_1,b_2,k}$ with the basis $\{L, W, x\}$ and the products given by (\ref{eqq1}) and:
\begin{eqnarray*}
&&x\circ L=x, L\circ x=b_1W+x, x\circ W=0, W\circ x=b_2W,\\
&&x\circ x= (b_2^2-kb_2)L+(kb_1-b_1b_2)W+kx,\\
&&[x,L]=0, [x,W]=0.
\end{eqnarray*}
Two such Gel'fand-Dorfman bialgebras $B3_{b_1,b_2,k}$ and $B3_{b_1^{'}, b_2^{'}, k^{'}}$ are equivalent if and only if there exist $c_1$, $c_2\in K$ and $\beta\in K^{\ast}$ such that $b_1=\beta b_1^{'}-c_2$, $b_2=\beta b_2^{'}+c_1$, $k=\beta k^{'}+2c_1$. Let $c_2=-b_1$. Then it is easy to see that $B3_{b_1,b_2,k}$ is equivalent to $B3_{0,b_2,k}$. Moreover, $B3_{0,b_2,k}\equiv B3_{0,b_2^{'},k^{'}}$ if and only if there exist $c_1\in K$ and $\beta\in K^{\ast}$ such that $b_2=\beta b_2^{'}+c_1$ and $k=\beta k^{'}+2c_1$.

\indent{\bf Case B4}:
\begin{eqnarray*}
&&p(L)=b_1,p(W)=0,q(L)=1,q(W)=0,S=\left(\begin{array}{cc}0 & 0\\0 & 0\end{array}\right), T=\left(\begin{array}{cc}b_2-b_1b_2 & 0\\b_3 & b_2\end{array}\right),\\
&&a_1=-b_1b_2^2L+b_2b_3W,k=b_1b_2+b_2,\eta=0, D=\left(\begin{array}{cc}0 & 0\\0 & 0\end{array}\right),\\
&&for\ all\ b_1, b_2, b_3\in K.
\end{eqnarray*}
Thus in this case any 3-dimensional Gel'fand-Dorfman bialgebra that contains $(A,\circ,[\cdot,\cdot])$ as a subalgebra is isomorphic to the following Gel'fand-Dorfman bialgebra denoted by $B4_{b_1,b_2,b_3}$ with the basis $\{L, W, x\}$ and the products given by (4.1), (4.2) and:
\begin{eqnarray*}
&&x\circ L=x, L\circ x=(b_2-b_1b_2)L+b_3W+b_1x, x\circ W=0, W\circ x=b_2W,\\
&&x\circ x=-b_1b_2^2L+b_2b_3W+( b_1b_2+b_2)x,\\
&&[x,L]=0, [x,W]=0.
\end{eqnarray*}
Two such Gel'fand-Dorfman bialgebras $B4_{b_1,b_2,b_3}$ and $B4_{b_1^{'}, b_2^{'}, b_3^{'}}$ are equivalent if and only if there exist $c_1$, $c_2\in K$ and $\beta\in K^{\ast}$ such that $b_1= b_1^{'}$, $b_2=\beta b_2^{'}+c_1$ and $b3=\beta b_3^{'}-b_1c_2$. Therefore, in this case, there are the following equivalence classes: $B4_{0,0,0}$, $B4_{0,0,1}$, $B4_{b_1,0,0}$ for all $b_1\in K^{\ast}$.

\indent{\bf Case B5}:
\begin{eqnarray*}
&&p(L)=\frac{1}{2}, p(W)=0, q(L)=1, q(W)=0, S=\left(\begin{array}{cc}0 & 0\\0 & 0\end{array}\right), T=\left(\begin{array}{cc}\frac{1}{2}b_1 & 0\\b_2 & b_1\end{array}\right),\\
&&a_1=-\frac{1}{2}b_1^2L+b_3W,k=\frac{3}{2}b_1,\eta=0, D=\left(\begin{array}{cc}0 & 0\\0 & 0\end{array}\right),\\
&&for\ all\ b_1, b_2, b_3\in K.
\end{eqnarray*}
Thus in this case any 3-dimensional Gel'fand-Dorfman bialgebra that contains $(A,\circ,[\cdot,\cdot])$ as a subalgebra is isomorphic to the following Gel'fand-Dorfman bialgebra denoted by $B5_{b_1,b_2,b_3}$ with the basis $\{L,W,x\}$ and the products given by (\ref{eqq1}) and:
\begin{eqnarray*}
&&x\circ L=x,L\circ x=\frac{1}{2}b_1L+b_2W+\frac{1}{2}x,x\circ W=0,W\circ x=b_1W,\\
&&x\circ x=-\frac{1}{2}b_1^2L+b_3W+\frac{3}{2}b_1x,\\
&&[x,L]=0, [x,W]=0.
\end{eqnarray*}
Two such Gel'fand-Dorfman bialgebras $B5_{b_1,b_2,b_3}$ and $B5_{b_1',b_2',b_3'}$ are equivalent if and only if there exist $c_1, c_2\in K$ and $\beta\in K^{\ast}$ such that $b_1=\beta b_1^{'}+c_1$, $b_2=\beta b_2^{'}-\frac{1}{2}c_2$ and $b3= b_1b_2-\beta^2 b_1^{'}b_2^{'}+\beta^2 b_3^{'}$. Let $c_1=b_1$ and $c_2=-2b_2$ and $\beta=1$. Then we can get that $B5_{b_1,b_2,b_3}\equiv B5_{0,0,b_3-b_1b2}$. Moreover, $B5_{0,0,b_3}$ is equivalent to either $B5_{0,0,0}$ or $B5_{0,0,1}$.

Therefore, when $b=0$, $\mathcal{GH}^2(\mathbb{C}, A)$ can be described by the disjoint union of ${B2}_{0, 0, 0}$, ${B2}_{0, 0, 1}$, the equivalence classes of ${B3}_{0, b_2, k}$, $B4_{0,0,0}$, $B4_{0,0,1}$, $B4_{b_1,0,0}$ for all $b_1\in K^{\ast}$,  $B5_{0,0,0}$, $B5_{0,0,1}$ and ${B1}_{0, 0, b_3, b_4}$ for all $b_3$, $b_4\in \mathbb{C}$.
\end{example}

\begin{remark}
By the discussion in the introduction, Example \ref{exx1} also gives a complete classification of all quadratic Lie conformal algebras on $R=\mathbb{C}[\partial](\mathbb{C}L\oplus \mathbb{C}W\oplus \mathbb{C}x)$ containing $W(1,b)=\mathbb{C}[\partial]L\oplus \mathbb{C}[\partial]W$ (in particular, Heisenberg-Virasoro conformal algebra) as a subalgebra up to the isomorphisms which are induced from the isomorphisms of the corresponding Gel'fand-Dorfman bialgebras whose restrictions on $\mathbb{C}L\oplus \mathbb{C}W$ is the identity map.
\end{remark}

\end{document}